\theoremstyle{definition}
\newtheorem{definition}{Definition}[section]
\newtheorem{remark}[definition]{Remark}
\theoremstyle{plain}
\newtheorem{conjecture}[definition]{Conjecture}
\newtheorem{theorem}[definition]{Theorem}
\newtheorem{lemma}[definition]{Lemma}
\newtheorem{corollary}[definition]{Corollary}
\def \cl {\colon}
\def \ce {\coloneqq}
\def \E {\mathbb{E}}
\def \F {\mathbb{F}}
\def \Z {\mathbb{Z}}
\renewcommand{\le}{\leqslant}
\renewcommand{\ge}{\geqslant}
\def \eps {\varepsilon}
\def \mF{\mathcal{F}}
\newcommand{\eqdef}{\coloneq}
\newcommand {\R} {\mathcal{R}}
\newcommand {\rplus} { +_{\R} }
\newcommand {\rtplus} { +_{\widetilde{\R}} }
\newcommand {\Fp} {\F_p}
\newcommand {\D} {\Delta}
\title{\vspace{-3ex}On restricted sumsets with bounded degree relations}
\author{
	Minghui Ouyang\thanks{School of Mathematical Sciences, Peking University, Beijing 100871, China. \texttt{ouyangminghui1998@gmail.com}. }
}
\date{}
\begin{document}
	\maketitle
	\vspace{-5ex}
	\begin{abstract}
		Given two subsets $A, B \subseteq \mathbb{F}_p$ and a binary relation $\mathcal{R} \subseteq A \times B$, the restricted sumset of $A, B$ with respect to $\mathcal{R}$ is defined as $A +_{\mathcal{R}} B = \{ a+b \colon (a,b) \notin \mathcal{R} \}$. When $\mathcal{R}$ is taken as the equality relation, determining the minimum value of $|A +_{\mathcal{R}} B|$ is the famous Erd\H{o}s--Heilbronn problem, which was solved separately by Dias da Silva, Hamidoune and Alon, Nathanson and Ruzsa. Lev later conjectured that if $A, B \subseteq \mathbb{F}_p$ with $|A| + |B| \le p$ and $\mathcal{R}$ is a matching between subsets of $A$ and $B$, then $|A +_{\mathcal{R}} B| \ge |A| + |B| - 3$. 

		We confirm this conjecture in the case where $|A| + |B| \le (1-\varepsilon)p$ for any $\varepsilon > 0$, provided that $p > p_0$ for some sufficiently large $p_0$ depending only on $\varepsilon$. Our proof builds on a recent work by Bollob\'as, Leader, and Tiba, and a rectifiability argument developed by Green and Ruzsa. Furthermore, our method extends to cases when $\mathcal{R}$ is a degree-bounded relation, either on both sides $A$ and $B$ or solely on the smaller set. 
		
		In addition, we construct subsets $A \subseteq \mathbb{F}_p$ with $|A| = \frac{6p}{11} - O(1)$ such that $|A +_{\mathcal{R}} A| = p-3$ for any prime number $p$, where $\mathcal{R}$ is a matching on $A$. This extends an earlier construction by Lev and highlights a distinction between the combinatorial notion of the restricted sumset and the classcial Erd\H{o}s--Heilbronn problem, where $|A +_{\mathcal{R}} A| \ge p$ holds given $\mathcal{R} = \{(a,a) \colon a \in A\}$ is the equality relation on $A$ and $|A| \ge \frac{p+3}{2}$. 

		\medskip
		\noindent
		\textit{Keywords:}\, restricted sumsets, generalized Erd\H{o}s--Heilbronn problem, rectifiability, small subset with large sumset. 
	\end{abstract}
	
\section{Introduction}
	Given a prime number $p$ and two subsets $A, B \subseteq \Fp$, the sumset $A+B \eqdef \{a + b \cl a \in A, b \in B\}$ is defined as the collection of all pairwise sums of elements from the Cartesian product $A \times B$. The classical Cauchy--Davenport theorem states that for any two non-empty subsets $A, B$ of $\Fp$, we have $|A+B| \ge \min\{p, |A|+|B|-1\}$. A natural generalization of the Cauchy--Davenport theorem is to exclude a few pairs from $A \times B$ and consider the sumset formed by the remaining pairs. 
	
	In 1964, Erd\H{o}s and Heilbronn~\cite{EH64} conjectured that if $A$ is a non-empty subset of $\Fp$, then $|A \dot{+} A| \ge \min\{p, 2|A|-3\}$, where $A \dot{+} A \eqdef \{a+b \cl a,b \in A,\, a \neq b\}$. Dias da Silva and Hamidoune~\cite{DH94} confirmed this conjecture using the exterior algebra method in 1994. Later, Alon, Nathanson, and Ruzsa~\cite{ANR96} extended the Erd\H{o}s--Heilbronn conjecture to a non-symmetric setting using the polynomial method, particularly through the Combinatorial Nullstellensatz. Their method applies not only to the restricted sumset of distinct pairs but also to restricted sumsets with any restriction specified by a low-degree polynomial $P(x,y)$, i.e. $A \rplus B = \{a + b \cl P(a,b) \neq 0\}$. For further details on this approach, see~\cite{ANR96, ANR95, Alo99, Kar09, PS02}. 
	
	In this paper, we investigate the size of the restricted sumset with respect to any degree-bounded, combinatorially defined relation between $A$ and $B$. Lev~\cite{Lev00-ii} defined the \emph{restricted sumset} as
	
	\begin{definition}[Restricted Sumset] \label{def:restricted_sumset}
		Suppose $A, B$ are two subsets of $\Fp$ or $\Z$, and $\R \subseteq A \times B$ is a binary relation between $A$ and $B$. 
		The \emph{restricted sumset} of $A$ and $B$ is defined as
			\[ A \rplus B \ce \left\{ a+b \cl a \in A, b \in B, (a,b) \notin \R \right\}. \]
	\end{definition}
	
	One should note that $\R$ consists of the set of \emph{forbidden pairs}, rather than the set of pairs allowed under the addition operation, which was adopted in some other context. For more reference about research on this kind of restricted sumsets, see \cite{Lev00-i, Lev00-ii, Lev01-iii}, where Lev~\cite{Lev00-ii} investigates the minimum size of $A \rplus B$ according to $|A|, |B|$ and $|\R|$ for an arbitrary relation $\R$, and~\cite{Lev01-iii} under a notion of \emph{$(K,s)$-regular} on $\R$, which requires the maximum degree of $\R$ on both sides $A$ and $B$ to be at most $s$, and every element $c$ has at most $K$ representations as $c = a + b$ for $(a,b) \in \R$. 
	
	As a special case, when $\R$ is taken as an injective function from $B$ to $A$, i.e. $\R$ is a matching between subsets of $A$ and $B$, Lev~\cite{Lev00-ii} proposed the following conjecture. 
	
	\begin{conjecture} [Lev~\cite{Lev00-ii}] \label{conj:Lev}
		Suppose $p$ is a prime number, $A, B \subseteq \Fp$, $|B| \le |A|$, and $\R \cl B \to A$ is an injective function from $B$ to $A$. Then 
			\[ |A \rplus B| \ge 
				\left\{ \begin{array}{cl}
				|A| + |B| - 3& \quad \text{if } |A| + |B| \le p \\
				p-3& \quad \text{if } |A| + |B| = p+1 \\
				p-2& \quad \text{if } |A| + |B| \ge p+2. 
			\end{array} \right.  \]
	\end{conjecture}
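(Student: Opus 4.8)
I will prove the first line of Conjecture~\ref{conj:Lev} under the extra hypothesis $|A|+|B|\le(1-\eps)p$, and I would argue by contradiction: assume $|A\rplus B|\le|A|+|B|-4$. The plan is (a) to show that this forces $A$ and $B$ to be essentially arithmetic progressions with a common difference, (b) to transport $A$, $B$ and the matching $\R$ faithfully into $\Z$ by a rectification argument in the style of Green and Ruzsa, and (c) to invoke the recent theorem of Bollob\'as, Leader and Tiba, which settles the analogue of the conjecture over $\Z$ (in particular $|A'+_{\R'}B'|\ge|A'|+|B'|-3$ for finite $A',B'\subseteq\Z$ and injective $\R'\cl B'\to A'$), to reach the contradiction.

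\emph{Step (a).} Since $\R$ is a matching, $\{a+b\cl(a,b)\in\R\}$ has at most $|B|$ elements, so $A+B\subseteq(A\rplus B)\cup\{a+b\cl(a,b)\in\R\}$ gives $|A+B|\le|A|+2|B|-4$. Dually, a value $c\in A+B$ fails to lie in $A\rplus B$ only if \emph{every} representation $c=a+b$ is a forbidden pair; as $\R$ is a matching, the representation-sets of distinct such ``bad'' $c$ are pairwise disjoint nonempty subsets of $\R$, whence $|A+B|-|A\rplus B|\le|B|$, i.e.\ $|A\rplus B|\ge|A+B|-|B|$. The last inequality already settles the sub-case $A+B=\Fp$ once $|A|+2|B|\le p+3$ (then $|A\rplus B|\ge p-|B|\ge|A|+|B|-3$), and more generally handles every case in which $B$ is not too large compared with the gap $p-|A|-|B|$. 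In the complementary regime — $A$ and $B$ both of density bounded below in terms of $\eps$ — I would pass to $C\ce\Fp\sm(A+B)$, use $(C-B)\cap A=\es$ to get $|C-B|\le p-|A|$, and apply a Vosper-/$(3k-4)$-type stability theorem (in the form used by Green and Ruzsa) to deduce that $C$ and $B$, hence also $A$ and $A+B$, are arithmetic progressions — or are contained in short ones — sharing a common difference $d$. In every case one ends up with $A\subseteq P_A$, $B\subseteq P_B$ for APs $P_A,P_B$ of common difference $d$ with $|P_A|+|P_B|\le p$.

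\emph{Steps (b) and (c).} Dilating by $d^{-1}$ we may take $d=1$; then, writing $\Fp=\{0,1,\dots,p-1\}$, the sets $A$ and $B$ sit inside integer intervals whose lengths add to at most $p$, so no wrap-around occurs in forming $A+B$. Hence $(A,B)$ is Freiman $2$-isomorphic to a pair $(A',B')$ of finite integer sets with $|A'|=|A|$, $|B'|=|B|$, the matching transports to an injection $\R'\cl B'\to A'$, and $|A\rplus B|=|A'+_{\R'}B'|$. The Bollob\'as--Leader--Tiba theorem then gives $|A'+_{\R'}B'|\ge|A'|+|B'|-3=|A|+|B|-3$, contradicting the assumption.

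\emph{Main obstacle.} The hard part is Step~(a) in the dense regime: the crude bound $|A+B|\le|A|+2|B|-4$ is not strong enough to feed directly into an $\Fp$-version of Freiman's $3k-4$ theorem, which needs the sumset to miss $\Fp$ by a margin of order $\min\{|A|,|B|\}$; so one must instead extract the AP structure from the complement $\Fp\sm(A+B)$ together with the fact that the bad values are supported on a matching — and it is precisely here that the hypothesis $|A|+|B|\le(1-\eps)p$, rather than merely $\le p$, is used. A secondary technical point is to verify that the matching and the sumset transport faithfully after rectification, so that the integer theorem applies verbatim.
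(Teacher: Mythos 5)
The statement is a \emph{conjecture}; the paper itself does not prove it. What the paper establishes is the partial result \Cref{coro:Lev_conj}: the first line of \Cref{conj:Lev} under the extra hypothesis $|A|+|B|\le(1-\eps)p$, together with either $|B|\le c_\eps p$ (part (i)) or $p>p_0(\eps)$ (part (ii)). Your proposal correctly narrows to that same target, so the comparison should be against the proofs of \Cref{thm:main_Fp_case} and \Cref{thm:main_Fp_case_strong}.

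Your step (a) has a genuine gap, and you essentially concede it. From the contradiction hypothesis and the matching structure you derive $|A+B|\le|A|+2|B|-4$. In the balanced regime ($|B|\approx|A|$, both a positive proportion of $p$) this is far too weak: with $|A|+|B|\le(1-\eps)p$ one only gets $|A+B|\le p-\eps p+|B|-4$, which does not force $A+B$ to miss $\Fp$ by a margin of order $\min(|A|,|B|)$, so no $\Fp$-version of the $3k-4$/Vosper stability theorem applies. You say one ``must instead extract the AP structure from the complement'' but never supply such an argument. The unbalanced regime ($|B|\ll|A|$) is also not actually covered: your quick argument only closes the case $A+B=\Fp$; when $A+B\neq\Fp$, the bound $|A\rplus B|\ge|A+B|-|B|\ge|A|-1$ is weaker than $|A|+|B|-3$ as soon as $|B|\ge 3$, and you would need a stability statement showing $B$ lies in a short arithmetic progression to proceed — precisely what BLT Theorem 25 provides and what your plan omits.

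The paper's route is genuinely different and avoids the obstacle. It first splits by the ratio $|B|/|A|$. When $|B|\ge\alpha|A|$, it does \emph{not} attempt to locate arithmetic progressions: it uses Pl\"unnecke--Ruzsa to bound $|(A\cup B)+(A\cup B)|\le K|A\cup B|$ and then applies the Green--Ruzsa rectifiability theorem (\Cref{thm:Green--Ruzsa}) \emph{directly} — small doubling plus small density already yields a Freiman $2$-isomorphism into $\Z$, with no intermediate AP-structure step. This is exactly where the $|B|\le c_\eps p$ constraint in \Cref{coro:Lev_conj}(i) enters (Green--Ruzsa needs a tiny density). For \Cref{coro:Lev_conj}(ii), the balanced case instead invokes BLT Theorem 27 (\Cref{thm:BLT_strong_stab_B_Zp}) and a geometric argument with arcs, which is closer in spirit to what you want but requires their machinery, not a bare $3k-4$ theorem. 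When $|B|<\alpha|A|$, the paper uses BLT Theorems 25 and 26: Theorem 25 forces $B$ into a short AP and $A$ to avoid a long interval, and Theorem 26 (proved in the appendix) then gives either three elements $b_1,b_2,b_3$ with $|A+\{b_1,b_2,b_3\}|\ge|A|+|B|-1+\D$ or concludes that $(A,B)$ is rectifiable. In all rectifiable sub-cases, the relation $\R$ is transported along the rectifying map and \Cref{thm:main_Z_case}(i) with $\D=1$ (which gives $|A\rplus B|\ge|A|+|B|-3$ over $\Z$) finishes the argument — this is the integer theorem you correctly identify. So your steps (b) and (c) are sound in outline, but step (a) is the substantive part and it is precisely where your proposal has no complete argument; moreover, the rectification produced by Green--Ruzsa is an abstract Freiman isomorphism, not an interval embedding, so ``dilate by $d^{-1}$ and read off intervals'' is not quite how the transport works in the paper.
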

	
	The second case in the statement is necessary because of an example constructed by Lev~\cite{Lev00-ii}, which shows that there exists $A \subseteq \Fp$ and a matching $\R$ on $A$ satisfying $|A| = \frac{p+1}{2}$ and $|A \rplus A| = p-3$. We later give an example (\Cref{thm:counterexample_Lev}) with $A \subseteq \Fp$ that extends this construction, where $|A| = \frac{6p}{11} - O(1)$ and $|A \rplus A| \le p-3$. Hence, we should be more cautious concerning the restricted sumset in $|A| + |B| \ge p$ regime. 
	
	For symmetric restricted sumset, Bollob\'as, Leader, and Tiba~\cite{BLT22} established a deep result on the size of the restricted sumset when replacing one summand of $A+A$ by a subset $A' \subseteq A$ of constant size. Our proof is based on a few auxiliary results in the proof of their result. 
	
	\begin{theorem} [{\cite[Theorem 35]{BLT22}}] \label{thm:BLT_EH_extension}
		Let $\{d(n)\}_{n=1}^{\infty}$ be a sequence of natural numbers with $d(n) = o(n)$, and let $\eps>0$. Then there are integers $c$ and $n_0$ such that the following holds.
		
		Let $A\subseteq \Fp$ with $n_0 \le |A| = n \le (1 - \eps)p/2$, and let $\R \subseteq A \times A$ have degree at most $d(n)$. Then there is a set $A' \subseteq A$ of size at most $c$, such that $|A \rplus A'|\ge 2|A|-1-2d$. 
	\end{theorem}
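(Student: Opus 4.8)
\emph{The plan.} The plan is to first absorb the relation $\R$ at a cost of only $2d$, reducing to a statement about ordinary sumsets, and then to find a bounded $A'$ with $|A+A'|$ large by treating structured and unstructured $A$ separately. For the first step, note that since $\R$ has degree at most $d$, for any $A'\subseteq A$ the set $\R\cap(A\times A')$ has at most $|A'|\,d$ elements (each $a'\in A'$ occurs as a second coordinate in at most $d$ pairs). A value $s\in A+A'$ disappears from $A\rplus A'$ only if \emph{every} representation $s=a+a'$ with $a\in A$, $a'\in A'$ lies in $\R$, and the representations of distinct sums are disjoint pairs; hence at most $|A'|\,d$ sums are lost, and
\[ |A\rplus A'|\ \ge\ |A+A'|-|A'|\,d. \]
Thus it suffices to produce $A'\subseteq A$ of bounded size with $|A+A'|$ suitably large: if $|A'|=2$ we want exactly $|A+A'|=2|A|-1$ (so the loss is exactly $2d$), while otherwise, since $d=o(|A|)=o(p)$ and $p\ge 2|A|/(1-\eps)$ is large once $|A|\ge n_0$, it is enough to arrange $|A+A'|\ge(1-\eps/2)p$, which already exceeds $2|A|-1+|A'|\,d$.

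\emph{The structured case.} Fix a large constant $K=K(\eps)$ and suppose $A$ is ``structured'', say $|A+A|\le K|A|$ or, more robustly, $A$ is Freiman $2$-isomorphic to a set of integers. By the rectifiability theory of Green--Ruzsa, together with the Freiman-type structure theorem over $\Fp$, the density hypothesis $|A|\le(1-\eps)p/2$ is exactly what prevents the wrap-around obstruction (for $K$ chosen suitably in terms of $\eps$), so $A$ is Freiman $2$-isomorphic to some $\widetilde A\subseteq\Z$ with $|\widetilde A|=|A|$ via a bijection $\phi\cl A\to\widetilde A$. Put $A'=\phi^{-1}\bigl(\{\min\widetilde A,\ \max\widetilde A\}\bigr)$, so $|A'|=2$. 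A Freiman $2$-isomorphism induces a bijection of sumsets which carries $A\rplus A'$ onto $\widetilde A\,\rtplus\,\phi(A')$, where $\widetilde\R$ is the image of $\R$ and has the same maximum degree; hence $|A\rplus A'|=|\widetilde A\,\rtplus\,\phi(A')|$. In $\Z$ the translates $\widetilde A+\min\widetilde A$ and $\widetilde A+\max\widetilde A$ sit inside the intervals $[2\min\widetilde A,\ \min\widetilde A+\max\widetilde A]$ and $[\min\widetilde A+\max\widetilde A,\ 2\max\widetilde A]$, which meet only in the single point $\min\widetilde A+\max\widetilde A$, so $|\widetilde A+\phi(A')|=2|A|-1$ exactly; combined with the inequality above this gives $|A\rplus A'|\ge 2|A|-1-2d$.

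\emph{The unstructured case.} If $A$ is not of the above form, I would build $A'$ greedily so that a bounded number of translates of $A$ by elements of $A$ already cover at least $(1-\eps/2)p$ points. Suppose $S=A+\{a_1,\dots,a_{k-1}\}$ has fewer than $(1-\eps/2)p$ elements; then $\E_{a\in A}\bigl|(A+a)\setminus S\bigr|=|A|-\tfrac1{|A|}\sum_{s\in S}r_{A+A}(s)$, and since $A$ is not structured its additive energy $E(A)=\sum_s r_{A+A}(s)^2$ is bounded away from the maximum $(1-\delta_0)|A|^3$, while $A+A$ is spread out, which forces $\sum_{s\in S}r_{A+A}(s)\le(1-\delta)|A|^2$ as long as $|S|$ is not yet large; consequently some $a_k\in A$ gives a gain $\bigl|(A+a_k)\setminus S\bigr|\ge\delta|A|$ (when $A$ is dense one argues dually, that $\bigl|\bigcap_i(\Fp\setminus(A-a_i))\bigr|$ decays geometrically in $k$ because $\Fp\setminus A$ has density at least $(1+\eps)/2$). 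After $c=c(\eps)=O(1/\delta)$ steps one obtains $A'\subseteq A$ with $|A+A'|\ge(1-\eps/2)p$, and the first displayed inequality finishes the proof.

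\emph{The main obstacle.} The delicate point is making this dichotomy airtight and quantitative: sets of \emph{moderate} doubling and \emph{moderate} density are neither handled directly by the standard rectifiability criterion nor pseudorandom enough for the naive greedy bound, and the greedy gain estimate above can stall exactly on sets whose sum representation function $r_{A+A}$ is ``top-heavy''. I expect to resolve this by combining the quantitative Green--Ruzsa rectifiability theorem with the $3k-4$-type structure theorem in $\Fp$: either these produce an honest integer model of $A$ (reducing to the structured case), or they show that $A$, after a suitable dilation, is contained in an arithmetic progression modulo $p$ of length less than $p/2$ (again $2$-rectifiable, since then $A+A$ cannot wrap around), or else the failure of structure is strong enough to drive the greedy covering. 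This case analysis — rather than the integer computation or the averaging — is where the bulk of the work lies, which is presumably why the result sits as deep into \cite{BLT22} as it does, with the necessary rectifiability and structure lemmas established beforehand.
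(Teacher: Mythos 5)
The paper does not prove this statement; it is imported verbatim as \cite[Theorem 35]{BLT22}, so there is no internal argument to compare against. Assessing your proposal directly: the absorption step is correct and clean --- each missing sum $s\in(A+A')\setminus(A\rplus A')$ contributes at least one pair of $\R\cap(A\times A')$ with $a+a'=s$, different $s$ yield disjoint pair-sets, and $|\R\cap(A\times A')|\le d|A'|$, so indeed $|A\rplus A'|\ge|A+A'|-d|A'|$. The $|A'|=2$ computation in the structured case is also fine. The problems are in the dichotomy itself, and you are right to flag that as the main obstacle, but it is a genuine gap rather than a detail.

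Concretely, the Green--Ruzsa criterion (\Cref{thm:Green--Ruzsa}) requires $|A|\le(16kK)^{-12K^{2}}p$, so for any doubling constant $K\ge 2$ it says nothing at densities near $(1-\eps)/2$; the hypothesis $|A|\le(1-\eps)p/2$ alone does \emph{not} prevent wrap-around at moderate doubling, and the Freiman/Vosper structure theorem over $\Fp$ produces a short AP container only when the doubling is very close to $2$. So there is a range of moderate doubling and moderate density that neither branch of your dichotomy reaches. The ``unstructured'' branch is also not closed: the assertion that failure of structure forces $\sum_{s\in S}r_{A+A}(s)\le(1-\delta)|A|^{2}$ for every not-yet-large $S$ needs a proof, and neither a lower bound on doubling nor an upper bound on energy obviously delivers it --- Cauchy--Schwarz gives $\sum_{s\in S}r_{A+A}(s)\le\sqrt{|S|\,E(A)}$, which is not strong enough here, and ``small doubling $\Rightarrow$ large energy'' does not invert, so the moderate-doubling sets you must handle can still have top-heavy $r_{A+A}$. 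What the quoted ingredients (\Cref{thm:BLT_stab_2pts,thm:BLT_general_3pts,thm:BLT_strong_stab_B_Zp}) suggest is that \cite{BLT22} do not run a global doubling/energy dichotomy at all; they prove a \emph{local} stability alternative --- either two or three well-chosen translates of $A$ already give $|A+A'|\ge |A|+|B|-1+r$, or the pair is forced into a sharp interval-and-sparse-progression configuration, which is rectifiable by inspection (no Green--Ruzsa needed). Replacing your global dichotomy by that local alternative, and actually proving the covering estimate on the unstructured side, is precisely the work that is missing.
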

	
	As a corollary, this confirms Lev's conjecture under the assumption $A = B$ in $|A| \le (1-\eps)p/2$ regime. 
	
	\begin{corollary} [{\cite[Theorem 9]{BLT22}}] \label{thm:BLT_EH_simple}
		For each $\eps > 0$ and integer $d$ there is an $n_0$ such that the following holds. Let $A$ be a subset of $\Fp$ with $n_0 \le |A| < (1-\eps)p/2$, and $\R \subseteq A \times A$ is symmetric and has degree at most $d$. Then $|A \rplus A| \ge 2|A| - 1 - 2d$. 
	\end{corollary}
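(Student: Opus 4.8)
The plan is to derive this as an immediate consequence of \Cref{thm:BLT_EH_extension} by specializing the degree sequence to be constant. Fix $\eps > 0$ and the integer $d$, and apply \Cref{thm:BLT_EH_extension} with the same $\eps$ and with the constant sequence $d(n) \equiv d$; this is certainly $o(n)$, so the hypotheses are met, and we obtain integers $c$ and $n_0$. Now let $A \subseteq \Fp$ be any set with $n_0 \le |A| = n < (1-\eps)p/2$ and let $\R \subseteq A \times A$ have degree at most $d = d(n)$ (symmetry of $\R$ is in fact not needed for the argument). \Cref{thm:BLT_EH_extension} then supplies a subset $A' \subseteq A$ with $|A'| \le c$ and $|A \rplus A'| \ge 2|A| - 1 - 2d$, where the restricted sumset $A \rplus A'$ is taken with respect to the restriction of $\R$ to $A \times A'$, which still has degree at most $d$.

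It remains to note the trivial containment $A \rplus A' \subseteq A \rplus A$: since $A' \subseteq A$, every element $a + a'$ with $a \in A$, $a' \in A'$ and $(a,a') \notin \R$ is a sum of two elements of $A$ that is not forbidden by $\R$, hence lies in $A \rplus A$. Therefore $|A \rplus A| \ge |A \rplus A'| \ge 2|A| - 1 - 2d$, which is the desired bound.

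There is essentially no obstacle here: the only points to verify are that the constant sequence $d(n) \equiv d$ is an admissible (sub-linear) input to \Cref{thm:BLT_EH_extension} and that restricting the second summand from $A$ to $A' \subseteq A$ can only shrink the restricted sumset. The strict inequality $|A| < (1-\eps)p/2$ assumed here is slightly stronger than the hypothesis $|A| \le (1-\eps)p/2$ of \Cref{thm:BLT_EH_extension}, so no boundary case requires separate treatment.
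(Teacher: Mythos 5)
Your derivation is correct and matches the intended reading of the paper, which explicitly frames this statement as a corollary of \Cref{thm:BLT_EH_extension} (the paper itself gives no separate proof, citing it to BLT). Applying \Cref{thm:BLT_EH_extension} with the constant sequence $d(n)\equiv d$ and then using the monotonicity $A\rplus A'\subseteq A\rplus A$ for $A'\subseteq A$ is exactly the right one-line argument, and your observation that the symmetry hypothesis is not actually needed is also accurate.
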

	
	In \Cref{thm:main_Fp_case_strong}, we generalize this theorem to a non-symmetric setting. 
	
\subsection{Results}
	We first prove the following result about restricted sumsets with respect to a bounded degree relation for subsets of $\Z$. The proof is quite straightforward and not the main focus of this paper. However, since this result closely resembles its counterpart in $\Fp$, which is the primary focus of our study, we include it here for completeness. 
	
	\begin{theorem} [$\Z$ case] \label{thm:main_Z_case}
		Suppose $\D \ge 1$ is an integer, $A, B \subseteq \Z$ satisfy $|B| \le |A|$, and $\R \subseteq A \times B$ is a binary relation between $A$ and $B$. 
		
		(i) If the maximum degree of $\R$ on $B$ is at most $\D$, then
			\[ |A \rplus B| \ge |A| + |B| - 3\D. \]
		
		(ii) If the maximum degree of $\R$ on both $A$ and $B$ is at most $\D$, then
			\[ |A \rplus B| \ge |A| + |B| - 1 - 2\D. \]
	\end{theorem}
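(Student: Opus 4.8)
The plan is to exploit the linear order on $\Z$. Write $A=\{a_1<\dots<a_m\}$ and $B=\{b_1<\dots<b_n\}$ with $m=|A|\ge n=|B|$ (the sets being non-empty, and the case $n=1$ being immediate: then $|A\rplus B|=|A|-\deg_\R(b_1)\ge|A|-\D\ge|A|+|B|-1-2\D$). Call a \emph{monotone path} a sequence of cells of $A\times B$ in which each cell is obtained from its predecessor by replacing $a_i$ with $a_{i+1}$ or $b_j$ with $b_{j+1}$; along such a path the sums $a_i+b_j$ strictly increase, so a monotone path with $L$ cells of which at most $t$ belong to $\R$ certifies $|A\rplus B|\ge L-t$. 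A monotone path has at most $m+n-1$ cells, with equality only for paths from $(a_1,b_1)$ to $(a_m,b_n)$; thus everything reduces to routing such a path through few forbidden cells, using only the available degree bounds. I would first prove part (ii) and then deduce part (i) from it.

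\emph{Part (ii).} Use the L-shaped path $(a_1,b_1),\dots,(a_1,b_n),(a_2,b_n),\dots,(a_m,b_n)$, which has $m+n-1$ cells. A forbidden cell on it lies in the column $\{a_1\}\times B$ or in the row $A\times\{b_n\}$, so there are at most $\deg_\R(a_1)+\deg_\R(b_n)\le 2\D$ of them, giving $|A\rplus B|\ge(m+n-1)-2\D=|A|+|B|-1-2\D$.

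\emph{Part (i).} A whole column of $\R$ is now too costly, since the degree of $\R$ on $A$ is uncontrolled; instead I splice a low-degree column into the middle of the L. From $|\R|=\sum_{b\in B}\deg_\R(b)\le\D|B|\le\D|A|$ there is a column with $\deg_\R(a_c)\le|\R|/|A|\le\D$; taking $c$ with $\deg_\R(a_c)$ minimal, consider the monotone path
\[(a_1,b_1),\dots,(a_c,b_1),\ (a_c,b_2),\dots,(a_c,b_n),\ (a_{c+1},b_n),\dots,(a_m,b_n),\]
again with $m+n-1$ cells, whose forbidden cells number at most $\deg_\R(b_1)+\deg_\R(a_c)+\deg_\R(b_n)\le 2\D+\deg_\R(a_c)$. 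If $\deg_\R(a_c)\le\D-1$ this is $\le 3\D-1$ and $|A\rplus B|\ge(m+n-1)-(3\D-1)=|A|+|B|-3\D$. Otherwise $\deg_\R(a_i)\ge\D$ for every $i$, so $\D|A|\le\sum_i\deg_\R(a_i)=|\R|\le\D|B|\le\D|A|$; equality throughout forces $|A|=|B|$ and $\deg_\R(a_i)=\deg_\R(b_j)=\D$ for all $i,j$, so $\R$ has degree $\le\D$ on both sides and part (ii) gives $|A\rplus B|\ge|A|+|B|-1-2\D\ge|A|+|B|-3\D$ since $\D\ge1$.

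The one delicate point is this final unit: the three-segment path directly gives only $|A|+|B|-1-3\D$, and the bound $|A|+|B|-3\D$ is in fact attained (for $A=B=\{1,\dots,n\}$ and $\R$ the diagonal matching, $A\rplus A=\{k:3\le k\le 2n-1\}$ has size $|A|+|B|-3$), so that unit must genuinely be recovered. The dichotomy above is exactly what does this: in the only configuration where the path cannot be pushed below $3\D$ forbidden cells, $\R$ is forced to be $\D$-regular on both sides and the sharper estimate of part (ii) takes over. What then remains is just the bookkeeping that the displayed cell-sequences really are monotone paths and that the sums along them are pairwise distinct.
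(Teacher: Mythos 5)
Your proof is correct and takes essentially the same approach as the paper: both use monotone (staircase) paths through the grid $A\times B$, the L-shaped path for part (ii), and a three-segment path through a low-degree column together with a case analysis to recover the extra unit for part (i). The only organizational difference is that the paper splits on whether $\deg_\R(a_1)\le\D$ (using the L-path directly when it is, and an averaging argument to find a column of degree $\le\D-1$ when it is not), whereas you split on the minimum column degree and handle the residual case by forcing $\D$-regularity and invoking (ii); both resolve the same tight spot in equivalent ways.
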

	
	For the $\Fp$ case, we have the following result.
	
	\begin{theorem} [$\Fp$ case] \label{thm:main_Fp_case}
		For any $\eps > 0$ and integer $\D \ge 1$, there exist constants $c_\eps > 0$ and an integer $p_0$, such that the following holds for any prime number $p$. Suppose $A, B \subseteq \Fp$ satisfy $|B| \le |A|$, $|A| + |B| \le (1-\eps)p$, $|B| \le c_\eps p$, and $\R \subseteq A \times B$ is a binary relation between $A$ and $B$. Then, provided that $p > p_0$ or $|A| \ge 10\D$, we have
		
		(i) If the maximum degree of $\R$ on $B$ is at most $\D$, then
			\[ |A \rplus B| \ge |A| + |B| - 3\D. \]
		
		(ii) If the maximum degree of $\R$ on both $A$ and $B$ is at most $\D$, then
			\[ |A \rplus B| \ge |A| + |B| - 1 - 2\D. \]
		
		Moreover, $c_\eps$ and $p_0$ can be chosen as $c_\eps = \left(1 + \frac{1}{\alpha}\right)^{-1} \left( 8(2\D + 3 + \frac{1}{\alpha}) \right)^{-24(2\D + 3 + \frac{1}{\alpha})^4}$, $p_0 = 4^{10\D}$, where $\alpha = 2^{-18} \cdot 3.1 \cdot 10^{-1549} \eps$. 
	\end{theorem}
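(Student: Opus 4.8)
The plan is to run a dichotomy according to whether the configuration $(A,B)$ is additively structured, settling the structured side by a rectification argument that reduces everything to the integer statement \Cref{thm:main_Z_case}, and the unstructured side by a direct counting argument; the rectification step and the structural input behind it are where the auxiliary results from \cite{BLT22} and the Green--Ruzsa machinery enter.

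First I would dispose of the degenerate ranges: if $|A| \le 10\D$ (so $|A|+|B| \le 20\D$) the claimed bounds are either trivial or reduce to a bounded check that is valid once $p > p_0 = 4^{10\D}$, which is the only place the ``or $|A|\ge 10\D$'' alternative gets used; and if $|B|$ is bounded by a constant depending on $\D,\eps$ the statement reduces to an easy case (delete a low-degree element of $B$ and induct). So assume $|A|\ge 10\D$ and $|B|$ large. Next, the dichotomy. Note the elementary inequality $|A\rplus B|\ge |A+B|-|\R|$, since any element of $A+B$ not in $A\rplus B$ is the sum of a forbidden pair and distinct such elements use distinct pairs; combined with $|\R|\le \D|B|$ this settles the ``spread'' case in which the excess $|A+B|-|A|-|B|+1$ beats the total forbidden-pair budget. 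In the complementary ``structured'' case $|A+B|$ is not much larger than $|A|+|B|$, and since $|A|+|B|\le(1-\eps)p<p$ I would invoke the $\F_p$-analogue of Freiman's $3k$--$4$ theorem (Vosper/R\o dseth/Serra--Z\'emor type): the smallness of $|A+B|$ forces $A$ and $B$ to lie inside arithmetic progressions with a common difference $q$ of lengths close to $|A|$ and $|B|$. Dilating by $q^{-1}$ we may take $q=1$, so (after translating) $A\subseteq[0,\ell_A)$ and $B\subseteq[0,\ell_B)$ with $\ell_A+\ell_B \le |A|+|B|+o(p) < p$, using $|B|\le c_\eps p$ and $|A|+|B|\le(1-\eps)p$. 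Hence $A+B$ does not wrap modulo $p$, the reduction map into $\{0,\dots,p-1\}\subseteq\Z$ is a Freiman $2$-isomorphism on $A\cup B$ carrying $\R$ to a relation with the same degree bounds, and \Cref{thm:main_Z_case} applied to the images gives exactly the stated inequalities.

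The subtle point — and where the exact shape of $c_\eps$ becomes essential — is that the crude dichotomy leaves a genuine intermediate regime, most visibly when $\D\ge 2$ (the forbidden-pair budget $\D|B|$ then exceeds $|B|$, which is the threshold for the bare $3k$--$4$ theorem) or when $|A|$ is much larger than $|B|$ (so $A$ is itself too large to rectify, being potentially a positive proportion of $\F_p$). I would split further. If $|A|\le \tfrac1\alpha|B|$ then $|A\cup B|\le(1+\tfrac1\alpha)|B| \le (1+\tfrac1\alpha)c_\eps p$, which sits just below the threshold $\gamma(K,s)\,p$ of the Green--Ruzsa rectifiability theorem for the relevant doubling $K=O(\D+\tfrac1\alpha)$ and isomorphism order $s$ — this is exactly why $c_\eps$ carries the factor $(1+\tfrac1\alpha)^{-1}$ and the $\bigl(32\,s\,K\bigr)^{-\Theta(K^{?})}$ tail — so $A\cup B$ rectifies to $\Z$ and one finishes via \Cref{thm:main_Z_case}; the structural fact needed here is that moderate doubling $|A+B|\le |A|+O\!\bigl((\D+\tfrac1\alpha)|B|\bigr)$ still confines $A$ and $B$ to $O(\D+\tfrac1\alpha)$ translates of a common progression, keeping the doubling of $A\cup B$ bounded in terms of $\D,\alpha$. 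If instead $|A|>\tfrac1\alpha|B|$, then $A$ is so large relative to $B$ that after the same structural step $A$ is the complement of only $O(\D)$ intervals in the common-difference order, and a direct count of how many sums can have all of their representations forbidden — essentially the staircase/budget argument underpinning \Cref{thm:main_Z_case}, carried out on the cyclic group — yields the bound.

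I expect this last reconciliation to be the main obstacle: matching the intermediate-doubling structure to the exact constants $3\D$ and $1+2\D$ requires the careful ``three-region'' bookkeeping on the number of sums whose representations are all forbidden (few at each end by the degree bound, and none in the middle because middle sums have $\ge \min(|A|,|B|)$ representations), transported between $\Z$ and $\F_p$. Everything else is either the cited rectifiability input of Green and Ruzsa, the auxiliary structural lemmas from the proof of \Cref{thm:BLT_EH_extension}, or the integer case \Cref{thm:main_Z_case}, and the numeric values of $c_\eps$ and $p_0$ in the statement are precisely what make the rectification thresholds and the no-wraparound conditions hold simultaneously.
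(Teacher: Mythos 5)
Your overall architecture — a split into $|B|\ge\alpha|A|$ (rectify $A\cup B$ to $\Z$ and invoke \Cref{thm:main_Z_case}) versus $|B|<\alpha|A|$ (use the BLT structural machinery) — matches the paper's two cases, and you correctly identify Green--Ruzsa rectifiability as the engine of the first case. But there are two real gaps.

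First, in the balanced case your initial invocation of a Vosper/Freiman-$3k$--$4$ type theorem is the wrong tool: the hypothesis that $|A\rplus B|$ is small only yields $|A+B|\le(\D+1+\tfrac1\alpha)|B|$, i.e.\ doubling of order $\D$, and $3k$--$4$ theorems require doubling below $2$. The paper skips this entirely: from the small-sumset hypothesis it applies Pl\"unnecke--Ruzsa to bound $|(A\cup B)+(A\cup B)|\le(2\D+3+\tfrac1\alpha)^2|A\cup B|$, and then feeds this doubling constant directly into \Cref{thm:Green--Ruzsa}, which is exactly why $c_\eps$ has the $(32K)^{-24K^4}$ shape with $K=2\D+3+\tfrac1\alpha$. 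Your later restatement (``moderate doubling confines $A$ and $B$ to $O(\D+\tfrac1\alpha)$ translates of a progression'') is an unneeded and unproved intermediate claim; Pl\"unnecke--Ruzsa alone suffices.

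Second, and more seriously, the case $|B|<\alpha|A|$ is not handled by your proposal. You assert that ``after the same structural step $A$ is the complement of only $O(\D)$ intervals,'' but nothing you have cited or derived gives this, and it is not what holds: the relevant structure, from \Cref{thm:BLT_stab_2pts}, is that $B$ lies in a short progression $Q$ while $A$ has a long nearly-empty window $P$ (a single ``hole,'' not a complement of $O(\D)$ intervals). Your subsequent ``direct count on the cyclic group'' is not spelled out and does not obviously work: $A$ can have positive density in $\F_p$, so neither a rectification nor a staircase argument on $A+B$ is available without first producing either a good $3$-element subset of $B$ or an $\F_p$-to-$\Z$ isomorphism of the pair $(A,B)$. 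That alternative (three elements achieving $|A|+\{b_1,b_2,b_3\}|\ge|A|+|B|-1+\D$, or else $(A,B)$ rectifiable) is precisely the content of \Cref{thm:BLT_general_3pts}, which is the crux of the paper's second case and is absent from your argument. Without it, the proposal does not close.
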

	
	The condition $p > p_0$ for some constant $p_0$ is more natural and easier to state compared to $|A| \ge 10\D$. However, the latter can be particularly useful when $\R$ is dense, i.e. $\D = \Theta(p)$, as otherwise the requirement $p > p_0$ may force $p$ to be exponentially large in terms of $\D$. We therefore impose both conditions here. If either condition holds, the conclusion follows. 
	
	When the degree is bounded on both sides, we have the following stronger result. We do not combine \Cref{thm:main_Fp_case_strong} with \Cref{thm:main_Fp_case}(ii) because the dependence of $\beta, p_0$ on $\eps, \D$ in \Cref{thm:main_Fp_case_strong} is more intricate than that the corresponding parameters $c_\eps$ and $p_0$ in \Cref{thm:main_Fp_case}. Determining their explict forms requires a more detailed examination of the references. 
	
	\begin{theorem} [$\Fp$ case] \label{thm:main_Fp_case_strong}
		For any $\eps > 0$ there exists $\beta > 0$ such that for any integer $\D \ge 1$, there exists an integer $p_0$ with the following property: If $p$ is a prime number, $A, B \subseteq \Fp$ satisfy $|A| + |B| \le (1-\eps)p$, and $\R \subseteq A \times B$ is a binary relation between $A$ and $B$ with maximum degree (on both sides) at most $\D$, then provided that $|A| \ge \beta\D$ or $p > p_0$, we have
			\[ |A \rplus B| \ge |A| + |B| - 1 - 2\D. \]
		
		Moreover, $p_0$ can be chosen as $p_0 = \exp(O_\eps(\D))$ which we shall determine in the proof.
	\end{theorem}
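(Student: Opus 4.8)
The plan is to adapt the strategy behind \Cref{thm:main_Fp_case}(ii) --- which combines auxiliary structural lemmas from the work of Bollob\'as--Leader--Tiba (the same ones underlying \Cref{thm:BLT_EH_extension}) with a Green--Ruzsa-type rectification feeding into \Cref{thm:main_Z_case}(ii) --- and to add the extra argument needed to drop the hypothesis $|B| \le c_\eps p$, i.e.\ to cover the case in which $B$ is also of positive density.

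\textbf{Normalisations.} First I would reduce, using the symmetry of the hypotheses, to $|B| \le |A|$. Writing $A \rplus B = (A+B) \sm S$ with $S$ the set of $c$ all of whose representations $c=a+b$ are forbidden, Cauchy--Davenport together with $|A|+|B| \le (1-\eps)p$ gives $|A+B| \ge |A|+|B|-1$, so it suffices to prove $|S| \le |A+B| - |A| - |B| + 1 + 2\D$. Setting $A' \ce \{a \in A : (a,b) \in \R \text{ for some } b \in B\}$, one has $(A \sm A') + B \subseteq A \rplus B$ and hence $S \subseteq (A+B)\sm((A\sm A')+B)$; in particular the conclusion is immediate when $|A'| \le 2\D$, and symmetrically when at most $2\D$ elements of $B$ lie in forbidden pairs. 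Also $|B| \le \D+1$ is trivial since $|A\rplus B| \ge |A \rplus \{b\}| \ge |A| - \D$. So I may assume $\D + 2 \le |B| \le |A|$ and that at least $2\D+1$ elements on each side lie in forbidden pairs.

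\textbf{The structure dichotomy.} The heart of the proof (as for \Cref{thm:main_Fp_case}) is a dichotomy: either $A$ and $B$ are each contained in arithmetic progressions with a common difference and total length $\le |A| + |B| + O(\D)$, or their additive structure is ``spread'' enough to bound $|S|$ directly. In the structured case, rescaling the common difference to $1$ and translating makes $A$, $B$ and $A+B$ embed into $\Z$ without wrap-around --- the total length being $\le |A|+|B|+O(\D) < p$ --- so transporting $\R$ along the embedding and invoking \Cref{thm:main_Z_case}(ii) yields exactly $|A\rplus B| \ge |A|+|B|-1-2\D$. When $|B| \le c_\eps p$ the ``spread'' alternative is handled exactly as in \Cref{thm:main_Fp_case}: a small-doubling bound on $A\cup B$ plus the Green--Ruzsa rectification theorem, whose rectification exponent is polynomial in $2\D+3$ and is what produces the value $p_0 = \exp(O_\eps(\D))$. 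The new contribution is the regime $|B| \ge c_\eps p$, where I would instead use a quantitative stability theorem for sumsets of \emph{distinct} sets in $\Fp$, effective throughout $|A+B| \le (1-\eps)p$: if $|A+B|$ is close to the Cauchy--Davenport minimum one extracts the common-difference progressions and reduces to $\Z$ as above, and otherwise the surplus $|A+B| - (|A|+|B|-1)$, combined with $\sum_{c\in S} r_{A+B}(c) \le |\R| \le \D|B|$ and the usual lower bounds on the representation function, is enough to force $|S| \le |A+B|-|A|-|B|+1+2\D$.

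\textbf{The main obstacle} is precisely this dense regime. Here $|A|$ may exceed $(1-\eps)p/2$, so \Cref{thm:BLT_EH_extension} is not directly applicable, and the density of $A\cup B$ is bounded away from $0$, so $A\cup B$ cannot be rectified as a whole. Making the dichotomy go through therefore needs: (i) a stability/Vosper-type input for distinct summands that stays effective up to sumset density $1-\eps$ with blow-up $O(\D)$; (ii) a verification that the resulting progressions, of total length possibly close to $(1-\eps)p$, still embed into $\Z$ without wrap-around so that \Cref{thm:main_Z_case}(ii) may be applied; and (iii) a representation-counting estimate that closes the gap between the ``near-minimal $|A+B|$'' and ``large $|A+B|$'' sub-cases. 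Once these are in place, optimising the various thresholds against the alternatives $|A| \ge \beta\D$ and $p > p_0$ gives the constants $\beta$ and $p_0$ claimed in the statement.
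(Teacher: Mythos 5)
Your proposal correctly identifies the skeleton of the argument (WLOG $|B|\le|A|$, treat the sparse case $|B|\le c_\eps p$ by the small-doubling plus Green--Ruzsa rectification already in \Cref{thm:main_Fp_case}, and isolate the dense case $|B|\ge c_\eps p$ as the genuinely new difficulty), and your observations that in the dense regime \Cref{thm:BLT_EH_extension} is inapplicable and $A\cup B$ cannot be globally rectified are both accurate. However, the heart of the proof is exactly the part you label ``the main obstacle'' and then leave as a list of three unproven ingredients; the proposal does not supply them, so this is a genuine gap rather than a different route.

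Concretely, what the paper does and you do not: it invokes \Cref{thm:BLT_strong_stab_B_Zp} (with the roles of $A$ and $B$ swapped, since $|B|\ge\alpha|A|$ makes the sizes comparable) to get the dichotomy in a usable form --- either some $c$-element subset $A'\subseteq A$ already has $|A'+B|\ge|A|+|B|-1+r$ with $r=c\D$, in which case $|A'\rplus B|\ge|A|+|B|-1+r-c\D=|A|+|B|-1$ and we are done, or $A$ is contained in an arithmetic progression of size $|A|+r$, which after rescaling is an interval $I$. Your ``quantitative stability theorem for sumsets of distinct sets effective up to density $1-\eps$'' is precisely this statement, but you treat it as a black box to be found rather than citing and using it. The subsequent analysis is also nontrivial and is absent from the proposal: once $A\subseteq I$, the paper splits on the geometry of $B$ --- two far-apart points $b_1,b_2\in B$ give $|A+\{b_1,b_2\}|\ge|A|+|B|$ directly; if no such pair exists and $B$ lies in a short minor arc $J$ then $|I|+|J|<p$ forces $(A,B)$ to be a rectifiable pair (feeding into \Cref{thm:main_Z_case}); and if $B$ escapes $J$ one finds $b_1,b_2,b_3$ with $I+\{b_1,b_2,b_3\}=\Fp$, whence $|A\rplus B|\ge p-3r-3\D\ge|A|+|B|-1-2\D$ using $\eps p>3r+\D$. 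None of this arc case analysis, nor the careful bookkeeping making $r\le\delta|B|$ and the progression lengths compatible with \Cref{thm:BLT_strong_stab_B_Zp}'s hypotheses, appears in your sketch. Finally, the representation-counting fallback you propose for the ``spread'' sub-case (``$\sum_{c\in S}r_{A+B}(c)\le\D|B|$ and the usual lower bounds on the representation function'') does not on its own give $|S|\le 2\D+\text{surplus}$: in the worst case each element of $S$ might have only a bounded number of representations, so the bound $\D|B|$ is far too weak, and no stronger lower bound on $r_{A+B}$ over $S$ is established.
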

	
	It is noteworthy that Bollob\'as, Leader, and Tiba~\cite[Theorem 5]{BLT22} proved a related theorem, showing that whenever $|A| + |B| \le (1-\eps)p$ and $|B|$ is sufficiently small relative to $|A|$, then the optimal lower bound for the sumset, as given by the Cauchy--Davenport theorem, can be attained using just three elements from $B$, i.e. $|A + \{b_1, b_2, b_3\}| \ge |A| + |B| - 1$ for some $b_1, b_2, b_3 \in B$. 
	
	A direct application of this theorem could give us a weaker bound $|A \rplus B| \ge |A| + |B| - 3\D - 1$ in \Cref{thm:main_Fp_case}(i). Thus, our main objective is to improve the $O(\D)$ term in this inequality, which turns out to be rather delicate. Apart from harnessing some of the ideas from their proof, we also utilise a rectifiability argument given by Green and Ruzsa~\cite{GR06} to deal with the case when the size of $A$ and $B$ are comparable. 
	
	As a corollary, we confirm \Cref{conj:Lev} proposed by Lev whenever $|A| + |B| \le (1-\eps)p$ for sufficiently large $p$. 
	
	\begin{corollary} \label{coro:Lev_conj}
		(i) For any $\eps > 0$, let $c_\eps = \left( \frac{\eps}{10^{1600}} \right)^{\frac{10^{6400}}{\eps^4}}$. 
		Suppose $p$ is a prime number, $A, B \subseteq \Fp$ satisfy $|A| + |B| \le (1-\eps)p$, $|B| \le |A|$, $|B| \le c_\eps p$, and $f \cl B \to A$ is an arbitrary function. We have
			\[ \Big| \big\{ a+b \cl a \in A, b \in B, a \neq f(b) \big\} \Big| \ge |A| + |B| - 3. \]
		
		(ii) For any $\eps > 0$, there exists $p_0$ such that for any prime number $p > p_0$, $A, B \subseteq \Fp$ satisfy $|A| + |B| \le (1-\eps)p$, and $\R$ is a matching between some elements of $A$ and $B$. We have
			\[ \Big| \big\{ a+b \cl a \in A, b \in B, (a,b) \notin \R \big\} \Big| \ge |A| + |B| - 3. \]
	\end{corollary}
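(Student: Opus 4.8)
The plan is to deduce both parts of \Cref{coro:Lev_conj} from \Cref{thm:main_Fp_case} and \Cref{thm:main_Fp_case_strong} by specialising to a relation of maximum degree $1$. For part (i), given an arbitrary function $f \cl B \to A$, I would set $\R \ce \{(f(b), b) \cl b \in B\} \subseteq A \times B$; then $(a,b) \notin \R$ exactly when $a \neq f(b)$, so $A \rplus B$ is precisely the set on the left of the desired inequality. Each $b \in B$ lies in exactly one pair of $\R$, so $\R$ has maximum degree at most $1$ on $B$ (its degree on $A$ may be large, since $f$ need not be injective), and \Cref{thm:main_Fp_case}(i) with $\D = 1$ yields $|A \rplus B| \ge |A| + |B| - 3$. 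For part (ii), a matching between subsets of $A$ and $B$ is a relation of maximum degree at most $1$ on \emph{both} sides, so I would invoke \Cref{thm:main_Fp_case_strong} with $\D = 1$ to obtain $|A \rplus B| \ge |A| + |B| - 1 - 2 = |A| + |B| - 3$, taking for the corollary's $p_0$ the threshold that \Cref{thm:main_Fp_case_strong} provides for the parameters $\eps$ and $\D = 1$ (the alternative hypothesis $|A| \ge \beta\D$ then plays no role).

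What remains is to verify that the hypotheses of \Cref{thm:main_Fp_case}(i) hold in the setting of part (i). First, as is implicit in \Cref{conj:Lev}, we take the sets $A,B$ to be nonempty, so $|B| \ge 1$; together with $|B| \le c_\eps p$ this forces $p \ge c_\eps^{-1}$, which is enormously larger than the value $p_0 = 4^{10}$ appearing in \Cref{thm:main_Fp_case}, so the clause ``$p > p_0$'' there is automatic and no separate largeness assumption on $p$ is needed in part (i). Second, substituting $\D = 1$ into the explicit expression for $c_\eps$ stated in \Cref{thm:main_Fp_case} and estimating it crudely in terms of $\eps$, I would check that it is at least $\left(10^{1600}/\eps\right)^{-10^{6400}/\eps^4}$, the value prescribed in \Cref{coro:Lev_conj}(i); hence the corollary's hypothesis $|B| \le c_\eps p$ implies the one required by \Cref{thm:main_Fp_case}. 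The conditions $|B| \le |A|$ and $|A| + |B| \le (1-\eps)p$ carry over verbatim.

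Since \Cref{thm:main_Fp_case} and \Cref{thm:main_Fp_case_strong} are already established, there is no genuine mathematical obstacle here: the corollary is a straightforward specialisation to $\D = 1$. The only points requiring care are the constant bookkeeping in part (i) --- confirming that the explicitly stated $c_\eps$ is small enough to fall within the admissible range of \Cref{thm:main_Fp_case}, and observing that demanding $|B| \le c_\eps p$ together with $B \ne \es$ already forces $p$ to be astronomically large (so that, unlike in part (ii), one need not assume $p$ large outright) --- along with the handling of the trivial case $B = \es$.
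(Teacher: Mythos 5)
Your proposal is correct and is exactly the argument implicit in the paper: part~(i) follows from \Cref{thm:main_Fp_case}(i) with $\D=1$ by encoding $f$ as the degree-$1$ (on $B$) relation $\R = \{(f(b),b) : b \in B\}$, and part~(ii) follows from \Cref{thm:main_Fp_case_strong} with $\D=1$ since a matching has degree at most $1$ on both sides. Your observations on constant bookkeeping are also sound: with $\D=1$ the theorem's explicit $c_\eps$ (using $\alpha = 2^{-18}\cdot 3.1\cdot 10^{-1549}\eps$) comfortably dominates the corollary's stated value, the nonemptiness of $B$ together with $|B|\le c_\eps p$ forces $p \ge c_\eps^{-1} \gg 4^{10} = p_0$, and for part~(ii) one simply inherits the $p_0$ from \Cref{thm:main_Fp_case_strong}.
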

	
	A natural question is how tight \Cref{thm:main_Fp_case} is. \Cref{thm:main_Fp_case}(ii) is already tight even in $\Z$, and the bound in (i) is weaker than (ii) whenever $\D \ge 2$. One would hope that we could prove $|A \rplus B| \ge |A| + |B| -1 - 2\D$ for any relation $\R$ with bounded degree $\D$ from the smaller side $B$ under some reasonable assumption. However, the following example shows that this is too optimistic, even in the case where $|A| = |B|$ and $A, B \subseteq \Z$. 
	
	\begin{theorem} \label{thm:Z_construction}
		Given integers $\D \ge 1$ and $n \ge 2\D$. There exist $A, B \subseteq \Z$, $|A| = |B| = n$ and a relation $\R \subseteq A \times B$ with bounded degree $\D$ from $B$ such that
			\[ |A \rplus B| = |A| + |B| - 1 - \left\lfloor \frac{5\D}{2} \right\rfloor. \]
	\end{theorem}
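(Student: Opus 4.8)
The strategy is to produce an explicit triple $(A,B,\R)$. I would take $B$ to be a genuine arithmetic progression and $A$ to be a slightly longer progression with a few carefully placed ``holes''; the relation $\R$ will simply forbid \emph{all} pairs whose sum lies in a designated set $S\subseteq A+B$, so that $A\rplus B=(A+B)\setminus S$ and everything reduces to computing $|A+B|$, $|S|$, and checking the degree bound. Concretely, set $m\ce\floor{\D/2}$, let $B=\{0,1,\dots,n-1\}$, and let $A=\{0,1,\dots,n+m-1\}\setminus H$ where $H$ is a set of $m$ holes to be chosen (so $|A|=n$). If $H$ is positioned so that no value of $[0,2n+m-2]$ is lost, then $A+B=\{0,1,\dots,2n+m-2\}$, of size $2n-1+m$. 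Now take $S$ to consist of four blocks: a block of $\D$ consecutive sums at the bottom, the $m$ holes $H$ themselves, their translates $H+(n-1)$, and a block of $\D$ consecutive sums at the top of $A+B$; with the blocks and $H$ pairwise disjoint this gives $|S|=2\D+2m=\floor{5\D/2}+\floor{\D/2}$. Putting $\R=\{(a,b)\in A\times B:a+b\in S\}$ yields
\[
|A\rplus B|=|A+B|-|S|=(2n-1+m)-\bigl(\floor{5\D/2}+m\bigr)=|A|+|B|-1-\floor{5\D/2},
\]
which is the desired value. (For $\D=1$ one has $m=0$ and this is just the progression $A=B=\{0,\dots,n-1\}$ with $\R$ forbidding the unique pairs summing to $0$ and to $2n-2$.)

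It remains to check the only nontrivial point: that $\R$ has degree at most $\D$ on the $B$-side, i.e.\ that $|S\cap(b+A)|\le\D$ for every $b\in\{0,\dots,n-1\}$, where $b+A=[b,b+n+m-1]\setminus(H+b)$ is a set of exactly $n$ integers (the ``window'' seen from $b$). The extreme windows behave perfectly: $0+A=A$ contains the entire bottom block but \emph{excludes} $H$, so $|S\cap(0+A)|=\D$; symmetrically $|S\cap((n-1)+A)|=\D$; and the $2m$ ``extra'' sums $H\cup(H+n-1)$ are exactly the elements of $A+B$ that lie in neither extreme window, which is why they can be added to $S$ ``for free'' against those two windows. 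So I would first record the clean identity ``uncovered sums $=H\cup(H+n-1)$'' and the resulting bound $|S|\le 2\D+2m$, confirming the construction is at least not over-optimistic.

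The hard part — and the reason $H$ must be chosen with care rather than arbitrarily — is controlling the \emph{middle} windows $b+A$ for $1\le b\le n-2$. As $b$ grows, the window slides off the bottom block but, because its excluded set $H+b$ generally misses $H$, it \emph{re-admits} all $m$ elements of $H$; naively this causes a window near $b=1$ to carry $(\D-1)+m$ sums, which exceeds $\D$ once $\D\ge 4$. The fix is to choose the positions of the holes and of the bottom block of $S$ in a coordinated way: one wants the translates $H+b$ (for the small $b$ where the window still sees the whole bottom part of $S$) to keep intersecting $S$, so that the $m$ re-admitted hole-sums are offset by the bottom-block sums that have already slid out of the window. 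I expect this to be a short but fiddly case analysis that works precisely when $n$ is not too small relative to $\D$; I would carry it out by splitting the windows into the ranges ``$b\le$ (a small threshold)'', ``middle $b$'', and ``$b$ near $n-1$'', using in each range that the window meets at most one of $H$, $H+(n-1)$ and that the relevant block of $S$ is short. For the handful of residual small values of $n$ with $2\D\le n$ below that threshold, a direct ad hoc construction (or a short computer check) finishes the claim; these contribute nothing conceptual.

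Finally, I would verify the two remaining bookkeeping items: (i) choosing $H$ so that $A+B$ is the full interval $[0,2n+m-2]$ — equivalently that the holes are bounded away from $0$ and from $n+m-1$, which is easy; and (ii) that the four constituent pieces of $S$ are pairwise disjoint and fit inside $[0,2n+m-2]$, which is where the hypothesis $n\ge 2\D$ enters (it is exactly enough room to place $m=\floor{\D/2}$ holes in the band $[\D,\,n-\lceil\D/2\rceil-1]$ while keeping $H+(n-1)$ disjoint from the top block). With these in hand the equality $|A\rplus B|=|A|+|B|-1-\floor{5\D/2}$ follows, and the bound $|A\rplus B|\ge|A|+|B|-3\D$ from \Cref{thm:main_Z_case}(i) shows (when $\D\le 2$) that this value is best possible.
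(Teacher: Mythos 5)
Your construction is the same as the paper's up to a harmless relabelling: you take $B$ to be an $n$-term progression, $A$ a slightly longer progression of length $n+m$ with a set $H$ of $m=\lfloor\D/2\rfloor$ holes, and forbid exactly the sums landing in
\[
S \;=\; \bigl(\text{bottom block of size }\D\bigr)\cup H\cup\bigl(H+\max B\bigr)\cup\bigl(\text{top block of size }\D\bigr),
\]
so that $A\rplus B=(A+B)\setminus S$ and the size count gives the desired value. This is exactly the paper's $C\cup D$ with $C$ the two translates of the hole by $\min B$ and $\max B$, and $D$ the two extreme blocks of size~$\D$. The bookkeeping you do (disjointness, $|A+B|=2n+m-1$, $|S|=2\D+2m=\lfloor 5\D/2\rfloor+m$) is correct, and both you and the paper agree that everything reduces to the one non-routine step: showing $|(A+b)\cap S|\le\D$ for every $b\in B$.

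The gap is that you neither pin down $H$ nor carry out that verification, and it is the \emph{only} substantive step of the proof. You explicitly acknowledge that the middle windows are ``the hard part'' and ``must be chosen with care rather than arbitrarily,'' and that the check is ``a short but fiddly case analysis'' to be done later. Until $H$ is fixed and the sliding-window estimate is proved, the theorem is not established. The paper fixes $H=\{n-\D+1,\dots,n-\D+r\}$ (a single block flush against the second component of $A$) and \emph{asserts} the degree bound without writing it out; so the paper's proof and yours have essentially the same missing verification, the difference being that the paper has at least committed to a concrete $H$.

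It is worth emphasising that your caution about the placement of $H$ is justified, because the paper's own choice appears to fail a corner case when $\D$ is even. Take the paper's coordinates $A=\{1,\dots,n-\D\}\cup\{n-\D+r+1,\dots,n+r\}$, $B=\{1,\dots,n\}$, $C=\{n-\D+1,\dots,n-\D+r\}+\{1,n\}$, $D=\{2,\dots,\D+1\}\cup\{2n+r-\D+1,\dots,2n+r\}$, and set $b=n-\D+1$. Then the translate $A+b=[n-\D+2,\,2n+r-\D+1]\setminus[2n-2\D+2,\,2n-2\D+r+1]$ contains all $2r$ elements of $C$ (the hole has slid well away from both pieces of $C$) together with the single element $2n+r-\D+1\in D$, giving
\[
|(A+b)\cap(C\cup D)|=2r+1=2\lfloor\D/2\rfloor+1,
\]
which equals $\D+1>\D$ whenever $\D$ is even (e.g.\ $\D=2$, $n=10$, $b=9$: $A+9=\{10,\dots,17,19,20\}$ meets $C\cup D=\{2,3,10,19,20,21\}$ in $\{10,19,20\}$). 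Moving the hole down by one, i.e.\ using $H=\{n-\D,\dots,n-\D+r-1\}$, repairs this — the critical window then misses an extra element of $D_2$ — but this is precisely the kind of delicacy you flagged and deferred. So: correct idea and correct target identity, same route as the paper, but the decisive degree-bound verification is absent, and its absence is not a formality.
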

	
	We believe the above theorem is tight, which suggests \Cref{thm:main_Z_case}(i) could potentially be strengthened to $|A \rplus B| \ge |A| + |B| - 1 - \left\lfloor \frac{5\D}{2} \right\rfloor$. It is worth noting that any improvement of the $-3\D$ term in \Cref{thm:main_Z_case}(i) would directly lead to a strengthening of \Cref{thm:main_Fp_case}(i), with the same parameters $c_\eps, p_0$ unchanged. The bottleneck of the current method for potential improvement of \Cref{thm:main_Fp_case}(i) lies in the case $A, B \subseteq \Z$ after applying the rectifiability argument. The remaining part of the proof would remain valid without any modification. 
	
	\begin{conjecture}
		Suppose $\D \ge 1$ is an integer, $A, B \subseteq \Z$ satisfies $|B| \le |A|$, and $\R \subseteq A \times B$ is a binary relation between $A$ and $B$. 
		If the maximum degree of $\R$ on $B$ is at most $\D$, we have
			\[ |A \rplus B| \ge |A| + |B| - 1 - \left\lfloor \frac{5\D}{2} \right\rfloor. \]
	\end{conjecture}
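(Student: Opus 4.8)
The plan is to recast the statement as a lattice‑path problem. Fix finite $A,B\subseteq\Z$. For any relation $\R$ one has $|A\rplus B|=|A+B|-|K(\R)|$, where $K(\R)$ is the set of sums whose every representation lies in $\R$. Given a target set $V\subseteq A+B$, the relation $\R_V\ce\{(v-b,b):v\in V,\ b\in B,\ v-b\in A\}$ satisfies $K(\R_V)=V$ and has degree $|V\cap(A+b)|$ on each $b\in B$; conversely any $\R$ with $V\subseteq K(\R)$ contains $\R_V$, so $|V\cap(A+b)|\le\deg_\R(b)$. Hence, with $m=|A|\ge n=|B|$,
\[
\min_{\deg_B\R\le\D}|A\rplus B|=|A+B|-\max\bigl\{|V|:V\subseteq A+B,\ |V\cap(A+b)|\le\D\ \text{for all }b\in B\bigr\},
\]
and it is enough to prove: every $W\subseteq A+B$ with $|W\cap(A+b)|\ge m-\D$ for all $b\in B$ has $|W|\ge m+n-1-\floor{5\D/2}$.

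Write $A=\{a_1<\dots<a_m\}$, $B=\{b_1<\dots<b_n\}$ and, in the $m\times n$ grid, mark a cell $(i,j)$ \emph{bad} when $a_i+b_j\notin W$; the hypothesis becomes ``at most $\D$ bad cells per column''. A monotone right/down lattice path from $(1,1)$ to $(m,n)$ uses exactly one cell of each of the $m+n-1$ antidiagonals $\{i+j=s\}$, its values $a_i+b_j$ being strictly increasing; after deleting its bad cells it becomes a strictly increasing chain in $W$. So the conjecture is equivalent to: \emph{in an $m\times n$ grid with at most $\D$ bad cells per column, some monotone path from $(1,1)$ to $(m,n)$ meets at most $\floor{5\D/2}$ bad cells}. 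The path that drops inside column $1$ to a row carrying the fewest bad cells, traverses it, and rises inside column $n$ already meets at most $3\D$ bad cells, recovering \Cref{thm:main_Z_case}(i) up to an additive~$1$.

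To push $3\D$ down to $\floor{5\D/2}=2\D+\floor{\D/2}$ I would avoid fixing a single traverse row. The path's total vertical drop is exactly $m-1$; I would route this drop through one or two interior ``descent columns'' selected to carry few bad cells, so that the bad budget is shared between the forced start column $1$, the interior descent column(s), and the forced end column $n$. I would then treat the resulting min‑cost monotone path problem by LP duality: its dual is a packing of pairwise disjoint ``staircase blockers'' inside the bad set, and the bound of $\D$ bad cells per column should force that at most $2\D+\floor{\D/2}$ disjoint blockers fit, the extremal packing being precisely the one realized in \Cref{thm:Z_construction}. Two regimes are easy and I would clear them first: if $m$ is much larger than $n$ a least‑bad row costs only $\floor{n\D/m}\le\D$, and if $n\le\floor{3\D/2}+1$ the bound already follows from one best element of $B$, since $|A\rplus B|\ge m-\D\ge m+n-1-\floor{5\D/2}$. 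The real work is the range $\floor{3\D/2}+2\le n\le m$.

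The main obstacle is the exact constant. The three‑leg path is wasteful exactly because columns $1$ and $n$ are imposed on it, and recovering the last $\D/2$ amounts to understanding how a set with bounded column degree can simultaneously block the bottom‑left corner, the mandatory interior drop, and the top‑right corner of the grid — which is the information contained in the tight example of \Cref{thm:Z_construction}, so matching that example is both where the factor $5/2$ originates and the technical core of the argument. An alternative route, induction on $|A|$ by deleting $a_m$, runs into the trouble that the new sums $a_m+b_j$ usually already lie in $(A\sm\{a_m\})+B$; bounding that overlap appears to require the same extremal analysis, so I expect the lattice‑path formulation to be the cleaner line of attack.
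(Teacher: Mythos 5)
This statement is posed in the paper as a \emph{conjecture}, not a theorem: the author exhibits a matching construction (\Cref{thm:Z_construction}) and explicitly identifies proving this lower bound as the open bottleneck for improving \Cref{thm:main_Z_case}(i), so there is no proof of the paper's to compare yours against.

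Your lattice-path reduction is correct and is a useful reformulation. The identity
\[
\min_{\deg_B\R\le\D}|A\rplus B|=|A+B|-\max\bigl\{|V|\cl V\subseteq A+B,\ |V\cap(A+b)|\le\D\ \text{for all }b\in B\bigr\}
\]
is exact via the canonical relation $\R_V$, and a monotone right/down path visits one cell of each antidiagonal, so it produces $m+n-1$ strictly increasing sums of which only the path's bad cells can fail to land in $W=(A+B)\sm V$. A small caveat: the grid assertion is strictly \emph{stronger} than the conjecture, not equivalent to it. When all $mn$ sums are distinct the conjecture is trivially satisfied, whereas the grid assertion is not, so a counterexample to your grid claim would not refute the conjecture (which could still be rescued by the additive structure of $\Z$). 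You only use the implication you do have, so this overstatement is harmless to the plan, but you cannot treat the conjecture as evidence for the grid claim. The genuine gap is that the central combinatorial assertion is never proved: that in any $m\times n$ grid with $m\ge n$ and at most $\D$ bad cells per column, some monotone path from $(1,1)$ to $(m,n)$ meets at most $\lfloor 5\D/2\rfloor$ bad cells. Your explicit three-leg path gives only $3\D$, which is even one worse than what \Cref{thm:main_Z_case}(i) already achieves, and for the improvement to $\lfloor 5\D/2\rfloor$ you describe a programme (route the vertical drop through carefully chosen descent columns, dualize the min-cost path, pack disjoint ``staircase blockers'') without carrying it out. The hedges --- ``I would'', ``should force'', ``I expect'' --- correctly signal that this is a plan and not an argument. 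Until that grid bound is established, the proposal is a plausible line of attack but does not prove the conjecture.
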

	
	The second part of this paper presents several constructions, two of which serve the main purpose. \Cref{coro:counterexample_Lev}(ii) explains why the additional requirement $|B| = O_\eps(p)$ is necessary for \Cref{coro:Lev_conj}(i) to hold; \Cref{thm:counterexample_Lev}(iii) demonstrates why a stronger assumption $|A| + |B| \ge (1+\delta)p$ is required for \Cref{conj:Lev} to hold in order to prove $|A \rplus B| \ge p-2$, even in the case where $|B| \le \eps p$. 
	
	\Cref{thm:counterexample_Lev}(i) restates a construction originally given by Lev~\cite{Lev00-ii}. We reformulated it here for consistency with the notation and style used throughout this paper. \Cref{thm:counterexample_Lev}(ii) was inspired from the same paper. 
	
	\begin{theorem} \label{thm:counterexample_Lev}
		Suppose $p$ is a prime number. 
		
		(i) For any integer $k \ge 1$ and $1 \le \ell \le \lfloor \frac{p-k-1}{2k-1} \rfloor$, there exist subsets $A, B \subseteq \Fp$ and a function $R \cl B \to A$ such that $|A| = p - (k-1) \ell - k + 1$, $|B| = k\ell + 2$ and $|A \rplus B| = p-k$. 
		
		(ii) For any prime number $p$, there exists a subset $A \subseteq \Fp$ and a symmetric relation $\R \subseteq A \times A$ with maximum degree $1$, such that $|A| = 6 \lfloor \frac{p}{11} \rfloor - 3$ and $|A \rplus A| = p-3$. 
		
		(iii) For any $\eps > 0$, there exists $\delta > 0$ such that for any sufficiently large prime number $p$, there exist $A, B \subseteq \Fp$ with $|A| + |B| > (1+\delta)p - O(1)$, $|B| \le \eps p$, and a relation $\R \subseteq A \times B$ with maximum degree $1$, such that $|A \rplus B| = p-3$. 
	\end{theorem}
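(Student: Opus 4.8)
All three constructions run on the same template. Since $A \rplus B$ is precisely $(A+B)$ with those residues deleted all of whose representations $c=a+b$ ($a\in A$, $b\in B$) lie in $\R$, the plan is to arrange $A+B=\Fp$, to pick three target residues $c_1,c_2,c_3$, and to let $\R$ consist of all representations of $c_1,c_2,c_3$; then $A\rplus B=\Fp\setminus\{c_1,c_2,c_3\}$ has size $p-3$, provided $\R$ has the prescribed shape. For a single $c_i$ its representations automatically have pairwise distinct first coordinates and pairwise distinct second coordinates, so all that must be verified is that the sets $A\cap(c_i-B)$ are pairwise disjoint and the sets $B\cap(c_i-A)$ are pairwise disjoint; in the symmetric case (ii) one also needs that none of the (at most three) points $c_i/2$ lies in $A$, so that no loop is created. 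The covering $A+B=\Fp$ is free from the Cauchy--Davenport theorem in parts (ii) and (iii) (there $2|A|>p$, resp.\ $|A|+|B|>p$), and is a direct check for the explicit sets in part (i).

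Part (i) is Lev's example, which we only restate: one takes $B$ to be an arithmetic-progression-type set, $A$ the complement of an arithmetic progression, and $c_1,\dots,c_k$ equally spaced, and verifies the representation structure and the cardinalities by direct computation. The hypothesis $\ell\le\lfloor(p-k-1)/(2k-1)\rfloor$ is exactly the inequality $|B|\le|A|$, which is what makes the forbidden relation a (partial) matching.

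For part (ii) the plan is to realise $A$ as a union of six arcs of length $\lfloor p/11\rfloor$ in $\Fp$ with three points removed (whence the $-3$), positioned so that for each $i$ the reflection $x\mapsto c_i-x$ carries one designated pair among the six arcs bijectively onto another designated pair while sending the remaining arcs off of $A$. Then $(c_i-A)\cap A$ is exactly that pair of arcs, the three pairs partition the six arcs, so the three representation-sets are disjoint, and the three deleted points are chosen to be the fixed points $c_i/2$, killing all loops; $A+A=\Fp$ is automatic since $2|A|>p$. The delicate point — and where the number $11$ enters — is to exhibit the six arc positions (a suitable $6$-element subset of the $11$ "blocks" of $\Fp$) together with a splitting into three pairs so that all three reflections behave as required at once, i.e.\ so that the three pairs are mutually "non-interacting"; this finite optimisation is what pins down the fraction $6/11$ and the loss of $3$ residues, and it is the main obstacle in the whole theorem.

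For part (iii) we apply part (i) with $k=3$ and $\ell$ of order $\eps p$, say $\ell=\lfloor\eps p/3\rfloor-1$: then $|B|=3\ell+2\le\eps p$, the relation $R$ is a genuine matching (possible precisely because $|B|\le|A|$, i.e.\ $\ell\le\lfloor(p-4)/5\rfloor$), $|A\rplus B|=p-3$, and $|A|+|B|=p+\ell=(1+\eps/3)p-O(1)$, so $\delta:=\eps/3$ works. Alternatively one builds it directly: take $B$ an interval of length $\Theta_\eps(p)$ and $A=\Fp\setminus S$ with $|S|$ somewhat larger than $\tfrac23|B|$, so that $|A|+|B|>(1+\delta)p-O(1)$ for an appropriate $\delta=\delta(\eps)>0$ and, by Cauchy--Davenport, $A+B=\Fp$; then choose $c_1,c_2,c_3$ so that the three sets $B\cap(c_i-A)$ — each of size at least $|B|-|S|\ge\delta p$, which is why $|S|$ must be of the order of, but somewhat larger than, $\tfrac23|B|$ — are pairwise disjoint in $B$, the corresponding sets $A\cap(c_i-B)$ being automatically disjoint since $A$ is almost all of $\Fp$. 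Parts (i) and (iii) are comparatively routine once the template above is in hand; the real work is the block pattern in part (ii).
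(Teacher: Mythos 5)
Your general template --- arrange $A+B=\Fp$, pick a small forbidden set $\mF=\{c_1,\ldots\}$, and let $\R$ be all representations of elements of $\mF$ --- matches the paper's strategy, and part (i) is indeed just Lev's example verified by a direct computation that you correctly outline. However, there are two genuine gaps in the rest.

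\textbf{Part (ii) is only a plan, not a proof.} You propose realising $A$ as six long arcs of length $\lfloor p/11\rfloor$ paired off by three reflections, but you explicitly defer the crucial ``finite optimisation'' that would exhibit the arc positions and the pairing --- which you yourself call ``the main obstacle in the whole theorem.'' That optimisation is precisely where the work lives. The paper's construction is in fact of a different shape: it is not six macroscopic arcs, but roughly $2p/11$ microscopic copies of the pattern $\{0,1,2,3,5,6\}$ spaced $11$ apart (so $A=\{0,1,2,3,5,6\}+11\{-(t-1),\ldots,t-1\}$ plus three boundary elements), and the matching pairs $11i+r$ with $-11i+r'$ so that the three missed residues are $\{-2,-1,2\}$. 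The enabling structural fact is \Cref{lem:large_density_cover}: the $11$-periodic set $\{0,1,2,3,5,6\}+11\Z$ has density $\tfrac{6}{11}$ and the property that each of its elements is isolated by some translate of $\{0,1,4\}$. You would need to either produce the paper's pattern or actually carry out your own optimisation --- neither is done.

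\textbf{Part (iii): both of your routes fail.} Your primary plan is to quote part (i) with $k=3$, asserting ``the relation $R$ is a genuine matching.'' It is not. In the construction for (i), an element $a=jk\in A$ with $1\le j\le\ell$ satisfies $|B\cap(C-a)|=k$, so it has degree $k$ in $\R$; for $k=3$ this is a function of degree $3$ on the $A$ side, not a matching, and the condition $|B|\le|A|$ does not help. (Indeed the statement of (i) only promises a function $R\colon B\to A$, never a matching.) Your alternative plan --- $B$ an interval, $A=\Fp\setminus S$ with $|S|$ slightly above $\tfrac{2}{3}|B|$, and three targets $c_i$ --- runs into a geometric conflict between the two degree conditions: to make the sets $B\setminus(c_i-S)$ pairwise disjoint you must have each $b\in B$ doubly covered by the $c_i-S$, which forces the $c_i$ to lie within distance $|S|<|B|$ of one another; but then the translates $c_i-B$ overlap heavily, and since $A$ is ``almost all of $\Fp$'' those overlaps lie mostly in $A$, so the sets $A\cap(c_i-B)$ are \emph{not} automatically disjoint --- quite the opposite. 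The paper sidesteps this by taking $B$ to be the same $11$-periodic pattern as in (ii) (not an interval) and $A=B\cup\{$an interval$\}$, reusing the matching from (ii); the extra elements of $A$ simply do not participate in $\R$, so the degree-$1$ condition on both sides is inherited for free.
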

	
	Plugging $\ell = \lfloor \frac{p-k-1}{2k-1} \rfloor$ and $\ell = 1$ into \Cref{thm:counterexample_Lev}(i), we have the following. 
	
	\begin{corollary} \label{coro:counterexample_Lev}
		Suppose $p$ is a prime number. 
		
		(i) For any integer $k \ge 1$, there exist subsets $A, B \subseteq \Fp$ and a function $R \cl B \to A$ such that $|A| = p - (k-1) \lfloor \frac{p-k-1}{2k-1} \rfloor - k + 1$, $|B| = k \lfloor \frac{p-k-1}{2k-1} \rfloor + 2$ and $|A \rplus B| = p-k$. 
		
		(ii) For any $\eps > 0$, there exist $A, B \subseteq \Fp$ and a function $R \cl B \to A$ such that $|A| = (1-2\eps)p + O(1)$, $|B| = \eps p + O(1)$ and $|A \rplus B| = |A| + |B| - 4$, where the value of the $O(1)$ term within the expressions of $|A|$ and $|B|$ are smaller than $3$. 
	\end{corollary}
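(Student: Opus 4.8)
The plan is to derive both parts of \Cref{coro:counterexample_Lev} by substituting appropriate values of $\ell$ into \Cref{thm:counterexample_Lev}(i), which already supplies the sets $A,B$, the function $R\cl B\to A$, and the equality $|A\rplus B| = p-k$; so the only work is bookkeeping of parameters and of the admissible range for $\ell$.

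\emph{Part (i).} I would fix an integer $k\ge 1$ with $p\ge 3k$ and set $\ell\ce\lfloor\frac{p-k-1}{2k-1}\rfloor$. This is an admissible choice in \Cref{thm:counterexample_Lev}(i): the required inequality $\ell\le\lfloor\frac{p-k-1}{2k-1}\rfloor$ holds with equality, and $\ell\ge 1$ is equivalent to $\frac{p-k-1}{2k-1}\ge 1$, i.e. to $p\ge 3k$. Then \Cref{thm:counterexample_Lev}(i) produces $A,B\subseteq\Fp$ and $R\cl B\to A$ with
\[ |A| = p-(k-1)\ell-k+1,\qquad |B| = k\ell+2,\qquad |A\rplus B| = p-k, \]
and inserting the chosen value of $\ell$ gives exactly the asserted formulas.

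\emph{Part (ii).} Here I would fix $\eps>0$, assuming as I may that $\eps<\frac13$ and $p\ge\frac{3}{1-3\eps}$ (otherwise the relations $|A|=(1-2\eps)p+O(1)$, $|B|=\eps p+O(1)$ are either meaningless or not attainable by this construction). Set $k\ce\lceil\eps p\rceil$ and $\ell\ce 1$. Since $3k\le 3(\eps p+1)\le p$, the value $\ell=1$ is admissible as in part~(i), and \Cref{thm:counterexample_Lev}(i) yields $A,B\subseteq\Fp$ and $R\cl B\to A$ with
\[ |A| = p-2k+2,\qquad |B| = k+2,\qquad |A\rplus B| = p-k. \]
Hence $|A|+|B| = p-k+4$, so $|A\rplus B| = |A|+|B|-4$. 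As $k=\lceil\eps p\rceil\in[\eps p,\eps p+1)$, one computes $|B|-\eps p = (k+2)-\eps p\in[2,3)$ and $|A|-(1-2\eps)p = 2(\eps p-k)+2\in(0,2]$, so both $O(1)$ deviations are nonnegative and strictly less than $3$, as required. (When $\eps p\in\Z$, taking $k=\eps p$ instead makes both deviations equal to $2$.)

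\emph{Main obstacle.} There is essentially none: all the combinatorial substance --- constructing the sets and the relation and verifying $|A\rplus B|=p-k$ --- is internal to the proof of \Cref{thm:counterexample_Lev}(i), which I am free to assume. The only real choices are the rounding $k=\lceil\eps p\rceil$, forced by the requirement that both error terms stay below $3$, and keeping track of the admissibility constraint $1\le\ell\le\lfloor\frac{p-k-1}{2k-1}\rfloor$, equivalently $p\ge 3k$, which is exactly why part~(ii) implicitly requires $\eps<\frac13$.
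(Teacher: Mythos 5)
Your proposal is correct and takes exactly the same route as the paper: the paper's entire justification is the one-line remark preceding the corollary, ``Plugging $\ell = \lfloor \frac{p-k-1}{2k-1} \rfloor$ and $\ell = 1$ into \Cref{thm:counterexample_Lev}(i), we have the following,'' and you carry out precisely that substitution, additionally supplying the natural choice $k = \lceil \eps p \rceil$ for part (ii) and verifying the admissibility constraint $p \ge 3k$ and the bounds on the $O(1)$ terms, which the paper leaves implicit.
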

	
	Thus, for potential extensions of \Cref{coro:Lev_conj}(i), in the regime where $|A| + |B| \ge p$, it is necessary to assume at least $|A| + |B| \ge p-k+\lfloor \frac{p-k-1}{2k-1} \rfloor + 4 > \frac{2k}{2k-1} p - k + 2$ in order to establish $|A \rplus B| \ge p-k+1$. In the regime $|A| + |B| \le (1-\eps)p$, an additional assumption $|B| \le \eps p$ is required. 
	
	Combining \Cref{coro:Lev_conj}(i) and the constructions above, we propose the following conjecture. The main obstacle in proving this conjecture lies in the fact that, in \Cref{thm:main_Fp_case}, the parameter $c_\eps$ is not linear in $\eps$. So it remains unclear how to prove $|A \rplus B| \ge |A| + |B| - 3$ under assumptions such as $|A| + c|B| < p$ for any constant $c$. 
	
	\begin{conjecture} \label{conj:our_conj_after_Lev}
		Suppose $p$ is a prime number, $A, B \subseteq \Fp$ with $|B| \le |A|$. Let $\R \cl B \to A$ be an arbitrary function from $B$ to $A$. If $|A| + 2|B| \le p$, then $|A \rplus B| \ge |A| + |B| - 3$. 
	\end{conjecture}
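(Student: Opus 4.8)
Write $S = A + B$ and let $m = |S| - |A \rplus B|$ be the number of elements of $S$ all of whose representations lie in $\R$. Since $\R$ is a function on $B$, each $b \in B$ belongs to at most one forbidden pair, so $m \le |B|$; and since $|A| + |B| - 1 \le |A| + 2|B| - 1 < p$, the Cauchy--Davenport theorem gives $|S| \ge |A| + |B| - 1$. Hence $|A \rplus B| \ge |A| + |B| - 3$ is equivalent to $m \le |S| - |A| - |B| + 3$. If $|S| \ge |A| + 2|B| - 3$ this is immediate from $m \le |B|$, so from now on I assume
\[ |A| + |B| - 1 \le |S| \le |A| + 2|B| - 4 , \]
in which case $A$ and $B$ have bounded doubling and $|S| < p - 4$. (Cases with $|B| \le 2$ are elementary and I set them aside.)

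\emph{The sparse case.} Fix $\eps_0 = \tfrac12$ and let $c_{\eps_0}, p_0$ be the constants of \Cref{thm:main_Fp_case} for this $\eps_0$ and $\D = 1$. If $|A| \le c_{\eps_0}\, p$ then $|B| \le |A| \le c_{\eps_0}\, p$ and $|A| + |B| \le 2|A| \le 2 c_{\eps_0}\, p \le (1 - \eps_0)\, p$, so \Cref{thm:main_Fp_case}(i) applies directly and gives $|A \rplus B| \ge |A| + |B| - 3$; the degenerate sub-cases $|A| < 10$ or $p \le p_0$ are dealt with by embedding the bounded sets Freiman-isomorphically into $\Z$ and invoking \Cref{thm:main_Z_case}(i). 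So I may assume $|A| > c_{\eps_0}\, p$.

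\emph{The range $|A| + 3|B| \le p + 7$.} Here the displayed bound $|S| \le |A| + 2|B| - 4 = |A| + |B| + \min\{|A|,|B|\} - 4$, together with $|S| \le p - 4$, is exactly the hypothesis of the $3k{-}4$ theorem over $\Fp$ (Freiman, R\o dseth, Serra--Z\'emor), so $A$ and $B$ lie in arithmetic progressions $A \subseteq I$, $B \subseteq J$ with a common difference and $|I| + |J| \le 2|S| - |A| - |B| + 2 \le |A| + 3|B| - 6 \le p + 1$. After dilating the difference to $1$ and translating, $I$ and $J$ become integer intervals $[0,|I|-1]$, $[0,|J|-1]$ with $|I| + |J| - 2 \le p - 1$, so no modular wrap-around can occur among the sums $a + b$ with $a \in A$, $b \in B$; consequently $(A, B)$ together with $\R$ embeds Freiman-isomorphically onto a pair of subsets of $\Z$ carrying a relation of degree $\le 1$ on the smaller side, and $|A \rplus B|$ is preserved. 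Applying \Cref{thm:main_Z_case}(i) with $\D = 1$ to the image gives $|A \rplus B| \ge |A| + |B| - 3$.

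\emph{The remaining window, and the main obstacle.} What is left is $|A| + 3|B| > p + 7$, i.e.\ (using $|A| + 2|B| \le p$) the narrow range $p - 3|B| + 7 < |A| \le p - 2|B|$ with $|B| \ge 8$: the size of $A$ sits in an interval of width less than $|B|$ just below $p - 2|B|$. In this window the $3k{-}4$ theorem still places $A$ and $B$ in arithmetic progressions $I \supseteq A$, $J \supseteq B$, but now $|I| + |J|$ may exceed $p + 1$, so the reduction to $\Z$ breaks down because the minimal progressions genuinely wrap around $\Fp$; and it is precisely near this window that the examples of \Cref{coro:counterexample_Lev}(ii) and \Cref{thm:counterexample_Lev}(iii) — which have $|A| + 2|B| = p + 6$ and $|A \rplus B| = |A| + |B| - 4$ — live, so the hypothesis $|A| + 2|B| \le p$, not merely bounded doubling, must be used. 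The natural route is a stability/energy argument: with $G = \Fp \setminus S$ one checks $G - B \subseteq \overline{A}$, and whenever a killed sum $c = f(b) + b$ has a \emph{unique} representation then also $c - (B \setminus \{b\}) \subseteq \overline{A}$; thus the $\Theta(p)$-sized set $\overline{A}$ must absorb roughly $m_1 |B|$ incidences, where $m_1$ counts such $b$, and if $m_1$ were too large this would force additive energy $\Omega(p^3)$ among $B$ and the shifts $\{c - b\}$, pushing $A$ and $B$ into short progressions and contradicting the displayed bound — while the killed sums with $\ge 2$ representations contribute at most $\lfloor (|B| - m_1)/2 \rfloor$ to $m$. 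The hard part is to make this precise across the \emph{entire} window: the quantitative sumset-structure theorems over $\Fp$ require the excess $|S| - |A| - |B|$ to be small relative to $p$, whereas here it may be as large as $|B| - 4$; equivalently, the threshold $|B| \le c_\eps p$ of \Cref{thm:main_Fp_case} has $c_\eps$ only doubly-exponentially small in $1/\eps$ rather than proportional to $\eps$, and it is exactly this non-linearity — or, alternatively, the lack of an energy/stability statement valid throughout $|A| + 2|B| \le p$ — that blocks the argument.
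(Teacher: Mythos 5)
This statement is posed in the paper as \Cref{conj:our_conj_after_Lev}, an open conjecture with no proof; the paper itself says that the non-linearity of $c_\eps$ in $\eps$ is precisely what prevents proving $|A\rplus B|\ge |A|+|B|-3$ under a hypothesis of the form $|A|+c|B|\le p$. Your write-up correctly identifies this same obstacle and is candid that it does not close the gap, so you have not produced a proof --- and you know it. The partial progress you sketch is reasonable in outline: the trivial reduction to $|S|\le |A|+2|B|-4$ with $m\le |B|$, the sparse case via \Cref{thm:main_Fp_case}(i) with $\D=1$, and an attempted rectification in a middle range. But I would flag one genuine soft spot even in that partial progress. Your second case invokes a ``$3k{-}4$ theorem over $\Fp$'' to place $A$ and $B$ in arithmetic progressions with $|I|+|J|\le 2|S|-|A|-|B|+2$, and then concludes no wrap-around because $|I|+|J|\le p+1$. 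The extant $3k{-}4$-type theorems over $\Fp$ (Freiman, R\o dseth, Serra--Z\'emor, Grynkiewicz) carry hypotheses that bound both the excess $r=|S|-|A|-|B|+1$ \emph{and} the distance of $|S|$ from $p$ (typically something like $|S|\le p-r-O(1)$ rather than merely $|S|\le p-4$). In your range $|A|$ may be as large as $p-2|B|$ while $r$ may be as large as $|B|-3$, so $|S|+r$ can exceed $p$, and it is not clear the cited covering conclusion applies; you would need to verify the exact hypotheses of whichever version you use, and when $|A|$ is close to $p$ the complement $\overline{A}$ --- not $A$ --- is the natural object of a structure theorem. This matters because the very window where the progression-based rectification is most delicate is exactly the window you later single out as the unresolved one. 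In short: your diagnosis of the bottleneck matches the paper's, your reduction steps are the natural ones, but the middle-range rectification needs a more careful citation and hypothesis check, and the ``remaining window'' is, as both you and the paper say, genuinely open.
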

	
	In the case where both sides have bounded degree $1$, \Cref{thm:counterexample_Lev}(ii) and (iii) indicate that extra care is required when extending \Cref{conj:Lev} beyond the $|A| + |B| \le p$ regime. In particular, a stronger assumption $|A| + |B| \ge (1+\delta)p$ is necessary even in the case $|B| \le \eps p$ in order to prove $|A \rplus B| \ge p-2$. 
	
	At the end of this paper, we examine the tightness of the construction in \Cref{coro:counterexample_Lev}(i). Specifically, we investigate the minimum possible value of $|A| + |B|$ required to ensure that $|A \rplus B| \ge p - k + 1$. We have the following result, with a proof that follows from a straightforward counting argument. 
	
	\begin{theorem} \label{thm:sum_greater_than_p_case}
		Suppose $p$ is a prime number and $k \ge 1$ is an integer. For any subsets $A, B \subseteq \Fp$, let $\R \cl B \to A$ be an arbitrary function. If $|A| + |B| \ge \lfloor \frac{2kp}{2k-1} \rfloor + 1$, then $|A \rplus B| \ge p-k+1$. 
	\end{theorem}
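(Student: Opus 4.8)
The plan is a short double-counting argument, run by contradiction. I would suppose $|A \rplus B| \le p-k$ and set $C \ce \Fp \sm (A \rplus B)$, so that $|C| \ge k$. The key observation is that every element of $C$ has only \emph{forbidden} representations: if $c \in C$, $b \in B$ and $c - b \in A$, then necessarily $(c-b,b) \in \R$, that is $R(b) = c-b$, since otherwise $c = (c-b) + b$ would lie in $A \rplus B$. Writing $S_c \ce \{ b \in B \cl R(b) + b = c \}$, this says exactly that $B \cap (c - A) \subseteq S_c$ for every $c \in C$.

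From here I would use two elementary facts. First, the sets $S_c$ ($c \in \Fp$) are pairwise disjoint with union $B$, since each $b \in B$ belongs to $S_c$ only for the single value $c = R(b)+b$; hence $\sum_{c \in C} |S_c| \le |B|$. Second, for any $X, Y \subseteq \Fp$ one has $|X \cap Y| \ge |X| + |Y| - p$, so with $X = B$ and $Y = c - A$ (of size $|A|$) we get $|B \cap (c-A)| \ge |A| + |B| - p$ for every $c$; a quick check shows the hypothesis forces $|A|+|B| \ge p+1$, so this quantity is positive. Summing the containment $B \cap (c-A) \subseteq S_c$ over the (at least $k$) elements of $C$,
\[ k\bigl( |A| + |B| - p \bigr) \;\le\; \sum_{c \in C} |B \cap (c-A)| \;\le\; \sum_{c \in C} |S_c| \;\le\; |B| . \]

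Rearranging yields $k|A| + (k-1)|B| \le kp$. Using $|B| \le |A|$, so $|B| \le \tfrac12(|A|+|B|)$, the left-hand side is at least $\tfrac{2k-1}{2}(|A|+|B|)$, whence $|A|+|B| \le \tfrac{2kp}{2k-1}$. Since $\lfloor \tfrac{2kp}{2k-1} \rfloor + 1 > \tfrac{2kp}{2k-1}$, this contradicts the hypothesis $|A|+|B| \ge \lfloor \tfrac{2kp}{2k-1} \rfloor + 1$, completing the argument.

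I do not anticipate a serious obstacle — the proof is entirely pigeonhole. The two points that need care are verifying that the hypothesis forces $|A|+|B| > p$ (so the bound $|B\cap(c-A)| \ge |A|+|B|-p$ is non-vacuous), and confirming that $\lfloor \tfrac{2kp}{2k-1}\rfloor + 1$ is precisely the threshold produced by combining $k|A| + (k-1)|B| \le kp$ with $|B| \le |A|$ and then passing to the floor. It is also worth recording why the role of $B$ as the smaller summand matters: the bound $\sum_c |S_c| \le |B|$ uses that $\R$ is the graph of a function \emph{on} $B$, so that the fibres $S_c$ partition $B$; no comparable bound in terms of $|A|$ is available, so the hypothesis $|B| \le |A|$ genuinely enters.
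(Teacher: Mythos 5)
Your proof is correct and is essentially the paper's own argument, with the double counting reorganized: the paper classifies translates of a size-$k$ forbidden set by the number of points of $A$ they cover and combines $\sum_i r_i = p$, $\sum_i i\,r_i = k|A|$ and $|B| \le r_0 + r_1$, whereas you sum $|B \cap (c-A)|$ over the forbidden $c$ and bound the two sides by inclusion--exclusion and by the disjointness of the fibres $S_c$; both routes yield $k|A| + (k-1)|B| \le kp$ and then finish with $|B| \le |A|$. You are also right to flag the role of $|B| \le |A|$: the theorem statement does not list it, but the paper's proof relies on it silently in exactly the same final step, and without it the claim genuinely fails --- for instance $p = 7$, $k = 2$, $A = \{0,1,2\}$, $B = \F_7$ with $R(0)=R(3)=R(4)=0$, $R(2)=R(6)=1$, $R(1)=R(5)=2$ has $|A|+|B| = 10 = \lfloor 28/3 \rfloor + 1$ yet $A \rplus B = \{1,2,4,5,6\}$ has size $p-k = 5$. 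So the hypothesis $|B| \le |A|$ should be regarded as part of the statement, and modulo that your argument matches the paper's.
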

	
	This shows that the construction in \Cref{coro:counterexample_Lev}(i) is tight up to an additive term of $k-2$ in $|A| + |B|$. 
	
	\paragraph{Notations.} 
	Throughout this paper, we assume that $p$ is a prime number, $A, B$ are finite subsets of either $\Z$ or $\Fp$, as will be clarified in context. 
	Additionally, we consider $\R \subseteq A \times B$ to be a binary relation between $A$ and $B$. 
	
	We say that $\R$ has \emph{bounded degree $\D$} if, when viewed as a bipartite graph with parts $(A, B)$, its maximum degree is at most $\D$. Similarly, we say that $\R$ has \emph{bounded degree $\D$ on $B$} if the maximum degree among elements in $B$ is at most $\D$. 
	
	\paragraph{Paper organization.}	
	In \Cref{sec:main_proof}, we prove \Cref{thm:main_Z_case}, \Cref{thm:main_Fp_case} and \Cref{thm:main_Fp_case_strong}. Then, in \Cref{sec:constructions}, we establish \Cref{thm:Z_construction}, \Cref{thm:counterexample_Lev}, and \Cref{thm:sum_greater_than_p_case}, along with additional discussions. 
	
\section{Preliminaries}
	We first state the celebrated Pl\"{u}nnecke--Ruzsa theorem here. 
	
	\begin{theorem}[Pl\"{u}nnecke--Ruzsa \cite{Plu70, Ruz89, Pet12}] \label{thm:Plunnecke--Ruzsa}
		Let $A, B$ be finite non-empty subsets of an abelian group, and suppose that $|A+B| \le K |A|$. Then, for all non-negative integers $m, n$ we have $|mB - nB| \le K^{m+n} |A|$, where $mB$ denote the $m$-fold sumset $\underbrace{B + \cdots + B}_{m}$. 
	\end{theorem}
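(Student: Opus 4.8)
The plan is to follow the Ruzsa--Petridis route to the Pl\"unnecke--Ruzsa inequality, which breaks into three ingredients: the Ruzsa triangle inequality, a restricted-sumset lemma of Petridis proved by induction, and a short combination of the two. Throughout, all sets are finite subsets of the fixed abelian group, and I write $hB$ for the $h$-fold sumset with the convention $0B = \{0\}$.

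First I would record the \emph{Ruzsa triangle inequality}: for any $X, Y, Z$ with $X \ne \emptyset$, one has $|X|\,|Y - Z| \le |X - Y|\,|X - Z|$. This follows from the injection $X \times (Y-Z) \to (X-Y) \times (X-Z)$ sending $(x, w)$ to $(x - y_w,\, x - z_w)$, where for each $w \in Y-Z$ one fixes a representation $w = y_w - z_w$ with $y_w \in Y$, $z_w \in Z$: from the image one recovers $w = (x - z_w) - (x - y_w)$, hence $y_w$ and $z_w$, hence $x$. Replacing $X$ by $-X$ (legitimate in an abelian group) turns this into the additive form $|Y - Z| \le |X + Y|\,|X + Z| / |X|$, which is what the final step will use.

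Next comes the heart of the argument, \emph{Petridis's lemma}: among all non-empty $Y \subseteq A$ pick $X$ minimizing $|Y + B|/|Y|$ and set $K' \ce |X + B|/|X|$; then $|Z + B| \ge K'|Z|$ for every non-empty $Z \subseteq A$, and $K' \le |A+B|/|A| \le K$. I would prove by induction on $|C|$ that $|X + B + C| \le K'\,|X + C|$ for every finite $C$. The case $|C| = 1$ is just the definition of $K'$. For the inductive step, write $C' = C \cup \{\gamma\}$ with $\gamma \notin C$ and put $Z \ce \{x \in X : x + B + \gamma \subseteq X + B + C\}$. Since $Z + B + \gamma$ lies in both $X+B+C$ and $X+B+\gamma$, the intersection of these two sets has size at least $|Z + B| \ge K'|Z|$, so
\[ |X + B + C'| \le |X+B+C| + |X+B| - K'|Z| \le K'|X+C| + K'\bigl(|X| - |Z|\bigr), \]
using the inductive hypothesis and $|X+B| = K'|X|$. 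Finally, the map $x \mapsto x + \gamma$ injects $X \sm Z$ into $(X + \gamma) \sm (X + C)$ --- indeed, if $x + \gamma \in X + C$ then $x + B + \gamma \subseteq X + B + C$, i.e. $x \in Z$ --- so $|X| - |Z| \le |X + C'| - |X + C|$, and the displayed bound becomes $|X+B+C'| \le K'|X+C'|$, closing the induction.

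To conclude, iterating Petridis's lemma with $C = (h-1)B,\ (h-2)B,\ \dots$ yields $|X + hB| \le K'^{\,h}|X|$ for all $h \ge 0$; combining this with the additive Ruzsa triangle inequality applied to $Y = mB$, $Z = nB$ gives
\[ |mB - nB| \le \frac{|X + mB|\,|X + nB|}{|X|} \le \frac{K'^{\,m}|X|\cdot K'^{\,n}|X|}{|X|} = K'^{\,m+n}|X| \le K^{m+n}|A|, \]
since $|X| \le |A|$. The main obstacle is Petridis's lemma: one must choose the auxiliary set $Z$ and order the estimates so that the minimality of $K'$ enters as a \emph{lower} bound on an intersection that is being subtracted, after which the leftover inequality collapses to the one-line injection above; the Ruzsa triangle inequality and the final bookkeeping are then routine.
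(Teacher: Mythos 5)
The paper does not prove this statement: it is quoted in the Preliminaries as a classical result with citations to Pl\"unnecke, Ruzsa, and Petridis, so there is no internal proof to compare against. Your argument is the now-standard Petridis proof of the Pl\"unnecke--Ruzsa inequality, and it is correct and complete: the Ruzsa triangle inequality is established by a valid injection; Petridis's lemma is set up properly (minimizing the ratio $|Y+B|/|Y|$ over non-empty $Y\subseteq A$, the inductive step with the auxiliary set $Z$ and the inclusion--exclusion bound $|X+B+C'|\le |X+B+C|+|X+B|-|Z+B|$, and the injection $x\mapsto x+\gamma$ from $X\setminus Z$ into $(X+C')\setminus(X+C)$ all check out); and the final combination $|mB-nB|\le |X+mB|\,|X+nB|/|X|\le K'^{\,m+n}|X|\le K^{m+n}|A|$ uses $1\le K'\le K$ and $|X|\le|A|$ correctly, including the degenerate cases $m=0$ or $n=0$ via the convention $0B=\{0\}$. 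The only point worth making explicit is that the minimality of $K'$ gives $|Z+B|\ge K'|Z|$ only for non-empty $Z$, but the inequality is trivial when $Z=\varnothing$, so nothing breaks.
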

	
	We also need the notion of rectifiability. 
	
	\begin{definition}[Rectifiability] \label{def:rectifiable}
		Suppose $p$ is a prime number and $k \ge 2$ is an integer. A subset $A \subseteq \Fp$ is called \emph{rectifiable of order $k$} if there exists an injection $f \cl A \to \Z$ such that, for any $a_1, \cdots a_k, a_1', \cdots a_k' \in A$, $a_1 + \cdots + a_k = a_1' + \cdots + a_k'$ if and only if $f(a_1) + \cdots + f(a_k) = f(a_1') + \cdots + f(a_k')$. When $k = 2$, we simply say that $A$ is \emph{rectifiable}. 
		
		A pair of subsets $(A,B)$ of $\Fp$ is called \emph{rectifiable}, if there exist injections $f \cl A \to \Z$ and $g \cl B \to \Z$ such that, for any $a, a' \in A$ and $b, b' \in B$, $a + b = a' + b'$ if and only if $f(a) + g(b) = f(a') + g(b')$. 
	\end{definition}
	
	We use the following results from~\cite{BLT22}. We write out the parameters $\eps, \alpha$ explicitly in \Cref{thm:BLT_stab_2pts} and modify the statement of \Cref{thm:BLT_general_3pts} in order to adapt our usage. The proof of \Cref{thm:BLT_general_3pts} can be found in \Cref{apdx}. 
	
	\begin{theorem} [{\cite[Theorem 25]{BLT22}}] \label{thm:BLT_stab_2pts}
		For all $\eps, \gamma>0$ and $t \ge 2^{10}$ there exist $\delta, \alpha>0$ such that the following holds. 
		Let $A$ and $B$ be subsets of $\Fp$. Suppose that
			\[ 2\le |B| \le \alpha |A| \text{ and } |A|+|B|\le (1-\eps)p \]
		and
			\[ \max_{b_1, b_2 \in B}|A+\{b_1, b_2\}| \le |A|+|B|-1+\delta |B|. \]
			
		Then there exist arithmetic progressions $P$ and $Q$ with the same common difference, and sizes at least $t|B|$ and at most $\lfloor(1+\gamma)|B|\rfloor$ respectively, such that $|A \cap  P| \le \gamma |B|$ and  $B \subseteq Q$. 
		
		Moreover, $\delta, \alpha$ can be chosen as $\delta = \min(2^{-13} \cdot 3.1 \cdot 10^{-1549}, 2^{-3} \gamma)$, $\alpha = \eps \cdot \min(2^{-5} \delta, 2^{-5}t^{-2} \gamma)$. 
	\end{theorem}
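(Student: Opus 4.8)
The plan is to turn the hypothesis into almost-periodicity of $A$, use it to force $A$ to be essentially a single interval after a dilation, and then read off $P$ and $Q$. For $b_1,b_2\in B$, writing $d=b_2-b_1$, we have $|A+\{b_1,b_2\}|=|A\cup(A+d)|=2|A|-|A\cap(A+d)|$, so the hypothesis says exactly that $|A\setminus(A+d)|\le(1+\delta)|B|-1=:k$ for every $d\in B-B$; i.e.\ every difference of $B$ is a ``$k$-almost period'' of $A$. Setting $\mathrm{Sym}_m:=\{d:|A\setminus(A+d)|\le m\}$, a union bound gives $\mathrm{Sym}_m+\mathrm{Sym}_{m'}\subseteq\mathrm{Sym}_{m+m'}$, hence $j(B-B)\subseteq\mathrm{Sym}_{jk}$; and since $|B|\le\alpha|A|$ with $\alpha$ minuscule, this is meaningful for $j$ up to $L:=\lfloor|A|/k\rfloor\gtrsim 1/\alpha\gtrsim\eps^{-1}\max(\delta^{-1},t^{2}\gamma^{-1})$, a large quantity. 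It is the slack between $L$ and $t$ (and $\delta^{-1}$) that drives everything else.

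The key point is that $\mathrm{Sym}_k(A)\supseteq B-B$ is \emph{large}. Indeed by Cauchy--Davenport $|B-B|\ge 2|B|-1$ (here $|B-B|<p$, as $B-B=\Fp$ would give $|A\cap(A+d)|\ge|A|-k$ for all $d$, whence $|A|^{2}\ge p(|A|-k)$ and $k\ge\eps|A|$, contradicting $k\le 2\alpha|A|$), and iterating Cauchy--Davenport along $j(B-B)\subseteq\mathrm{Sym}_{jk}$ yields $|\mathrm{Sym}_{jk}(A)|\ge\min(p,2j(|B|-1))$ for $j\le L$. A symmetry set of $A$ this large is very rigid, and the plan is to deduce from it that there is a nonzero $d^{*}$ such that, after passing to $d^{*-1}A,\,d^{*-1}B$ (which only rescales the common difference in the conclusion), $A$ agrees with a true interval $[x_0,x_0+|A|)\subseteq\Fp$ up to $O(\delta|B|)$ elements. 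This is the technical heart. The sets that actually need to be modelled in $\Z$ here — essentially $B-B$ and its bounded iterates, with the windows of $A$ they act upon — are small compared to $p$, and $|A|+|B|\le(1-\eps)p$ rules out wrap-around, so one can run a rectification argument in the style of Green--Ruzsa (using Pl\"unnecke--Ruzsa to control growth) and then, inside $\Z$, pin down ``union of $\le k$ intervals with a large symmetry set'' to ``one interval up to $O(\delta|B|)$ error'', absorbing short or specially-spaced intervals by rescaling again and recursing now that $B$ sits in a short progression. The case where $|A|$ is itself of order $p$ (so the right picture is that $A$ is almost everything) is handled by the same analysis applied to $A^{c}$, whose size is $\ge\eps p+|B|$ precisely because $|A|+|B|\le(1-\eps)p$.

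Given this, the rest is bookkeeping. In the rescaled coordinates, $|A\setminus(A+d)|=|d|+O(\delta|B|)$ for small $d$, so for $d\in B-B$ the bound $|A\setminus(A+d)|\le(1+\delta)|B|-1$ forces $|d|\le(1+O(\delta))|B|$; hence $B$ lies in an interval $Q'$ of length $\le(1+\delta)|B|\le\lfloor(1+\gamma)|B|\rfloor$ (using $\delta\le 2^{-3}\gamma$). Taking $P'$ to be an interval inside the complement of the interval approximating $A$, we get $|P'|\ge p-|A|-O(\delta|B|)\ge\eps p/2\ge t|B|$ (as $|B|\le\alpha p$ and $\alpha\le\eps\cdot 2^{-5}t^{-2}\gamma$) while $|A\cap P'|\le O(\delta|B|)\le\gamma|B|$; then $P:=d^{*}P'$ and $Q:=d^{*}Q'$ work. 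The single genuine obstacle is the middle step: showing that near-extremal Cauchy--Davenport behaviour for \emph{every} two-element subset of $B$ forces approximate interval structure on $A$ in $\Fp$. This is where the quantitative $\Fp$-rectification machinery is essential, where the explicit (and unattractive) values of $\delta$ and $\alpha$ come from, and where the ``$A$ large'' regime must be peeled off separately via the complement.
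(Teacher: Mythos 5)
The paper itself contains \emph{no proof} of this theorem: it is imported verbatim from \cite{BLT22} (their Theorem 25), with the only contribution here being to read the explicit values of $\delta$ and $\alpha$ out of that source. So there is no ``paper's own proof'' to compare against; I can only assess your sketch on its own terms.

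Your reduction at the start is correct: writing $d=b_2-b_1$, the hypothesis is exactly $|A\setminus(A+d)|\le(1+\delta)|B|-1=:k$ for all $d\in B-B$, i.e.\ $B-B\subseteq\mathrm{Sym}_k(A)$, the subadditivity $\mathrm{Sym}_m+\mathrm{Sym}_{m'}\subseteq\mathrm{Sym}_{m+m'}$ is standard, and your exclusion of $B-B=\Fp$ via the double count $\sum_d|A\cap(A+d)|=|A|^2$ is fine. The final bookkeeping (passing from ``$A$ is an interval up to $O(\delta|B|)$ error after a dilation'' to the stated $P,Q$ with the specific $\delta,\alpha$) also checks out.

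The problem is the step you yourself flag as the ``technical heart'': converting the large symmetry set into an approximate-interval structure for $A$. This is a genuine gap, not a detail, and as written it does not stand up. The rectification machinery you invoke (\Cref{thm:Green--Ruzsa}) applies to a set of \emph{small density} with \emph{small doubling}; but here $A$ itself can have size $\Theta(p)$ (so it is not rectifiable), the complement $A^c$ can also have size $\Theta(p)$ (so passing to the complement does not rescue you), and the hypothesis gives no direct control on $|(B-B)+(B-B)|$ relative to $|B-B|$ --- the containment $B-B\subseteq\mathrm{Sym}_k(A)$ only gives $2(B-B)\subseteq\mathrm{Sym}_{2k}(A)$, which is not a doubling bound without first knowing $|\mathrm{Sym}_{2k}(A)|$ is comparable to $|\mathrm{Sym}_k(A)|$. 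The subsequent phrase ``pin down union of $\le k$ intervals with a large symmetry set to one interval up to $O(\delta|B|)$ error'' presupposes a structural description of $A$ that nothing in the sketch delivers; it is unclear even what object the ``union of $\le k$ intervals'' is supposed to be. In short, the plan names the right target but the route to it --- which set is rectified, why it has small doubling, and how the interval structure of $A$ is extracted --- is unspecified at exactly the place where the whole argument must be done. For what it is worth, the actual proof in \cite{BLT22} (which this paper merely cites) proceeds by a direct interval-decomposition and averaging argument in the spirit of the Appendix proof of \Cref{thm:BLT_general_3pts}, not via $\mathrm{Sym}_m$ sets and Green--Ruzsa rectification, so your sketch is also methodologically a different (and, as it stands, incomplete) route.
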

	
	\begin{theorem} [{\cite[Theorem 26]{BLT22}}] \label{thm:BLT_general_3pts}
		Let $A$ and $B$ be subsets of $\Fp$, and let $r \ge 0$ be an integer. Suppose that $I=[p_l, p_r]$ and $J=[q_l, q_r]$ are intervals of $\Fp$ satisfying
			\[ |A| \ge 8r ,\ 2\le |B| \le \min(|A|-2r, p/2^{11}) ,\ |I|=(2^{10}+2r)|B| \ \text{and} \ |J| \le (1+2^{-10})|B| \]
		and
			\[ |A \cap I| \le 2^{-10} |B| \ \text{and} \ \{q_l,q_r\} \subseteq B \subseteq J. \]
			
		Then either there exist $b_1, b_2, b_3 \in B$ such that $|A+\{b_1, b_2, b_3\}|\ge |A|+|B|-1+r$ or the pair $(A,B)$ is rectifiable. 
	\end{theorem}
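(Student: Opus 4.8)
The plan is to ``unwrap'' $\Fp$ into an interval of integers by cutting it open at a carefully chosen point and to show that, for a good choice, the unwrapping is a genuine rectification of $(A,B)$ unless the way $A$ meets $I$ is rich enough to force one of the triple-sums to be large. Write $A_0 \ce A \sm I$ and $A_1 \ce A \cap I$, so $|A_1| \le 2^{-10}|B|$, and set $\ell \ce q_r - q_l$, so that $|B|-1 \le \ell \le (1+2^{-10})|B|-1$ and the endpoints $q_l, q_r$ lie in $B$; these will always be two of our three candidate points $b_1, b_2, b_3$. Since $\ell < |I|$, the set $A_0$ lies in the interval $\Fp \sm I$ of length $p-|I|$ and $A_0 + [0,\ell-1]$ still fits in a proper interval disjoint from the ``central part'' $I_c$ of $I$ obtained by deleting a segment of length $\ell$ from each end. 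For $c \in \Fp$ put $f_c(x) \ce (x - c \bmod p) \in \{0,\dots,p-1\}$ and $g(b) \ce (b - q_l \bmod p) \in \{0,\dots,\ell\}$; both are injective (on $\Fp$, resp.\ on $B$), and $f_c(a)+g(b) \equiv a+b-c-q_l \pmod p$. A short computation shows that if $a+b = a'+b'$ in $\Fp$ but $f_c(a)+g(b) \neq f_c(a')+g(b')$, then the two sides differ by exactly $\pm p$, and — after swapping the two pairs if needed — this forces $a \in \{c-\ell,\dots,c-1\}$ and $a' \in \{c,\dots,c+\ell-1\}$.

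The argument now splits according to whether $A$ misses a run of $\ell$ consecutive integers lying inside $I$. If it does, say $A \cap \{c-\ell,\dots,c-1\} = \es$ for a suitable $c$, then by the previous paragraph $f_c$ and $g$ admit no collision, so $(A,B)$ is rectifiable. This case in particular covers every $|B|$ below roughly $2^{20}+2^{11}r$, since for such $|B|$ the at most $2^{-10}|B|$ points of $A_1$ cannot block all $\ell$-runs inside the length-$(2^{10}+2r)|B|$ interval $I$ — and when $|B| < 2^{10}$ one even has $A_1 = \es$.

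In the remaining case every gap of $A$ inside $I$ has length less than $\ell$, so every point of $I_c$ lies within $\ell$ to the right of a point of $A$; hence $I_c \subseteq A + [0,\ell-1]$, which together with the disjointness of $A_0 + [0,\ell-1]$ from $I_c$ yields
\[ |A + [0,\ell-1]| \;\ge\; |A_0 + [0,\ell-1]| + |I_c| \;\ge\; |A_0| + (\ell-1) + (|I| - 2\ell), \]
far exceeding $|A|+|B|-1+r$. The task is then to transfer this to three elements of $B$: I would keep $b_1 = q_l$, $b_2 = q_r$, note that we may assume $|A + \{q_l, q_r\}| = 2|A| - |A \cap (A+\ell)| < |A|+|B|-1+r$ (else any third point finishes), and choose $b_3 \in B$ absorbing at least $r$ further elements into the union. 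The point is that the full interval-sum $A + [q_l, q_r]$ is covered by the $\ell+1$ translates $A + (q_l+t)$, $t \in [0,\ell]$, of which all but $\le 2^{-10}|B|$ are ``admissible'' ($q_l + t \in B$, by the density of $B$ in $J$), while $A + q_l$ and $A + q_r$ each meet $I_c$ in few points because $|A \cap I| \le 2^{-10}|B|$; a pigeonhole/averaging argument over the admissible translates should then locate a suitable $b_3$.

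The crux — and what I expect to be the main obstacle — is precisely this transfer step in the second case: squeezing the correct gain $+r$ (and, at the threshold, even the gain needed to account for the $|A_1|$ points not lying in the natural interval) out of just three translates. This is where the exact constants in $|I| = (2^{10}+2r)|B|$, $|J| \le (1+2^{-10})|B|$ and $|A \cap I| \le 2^{-10}|B|$ are consumed: the first forces $A$ to poke into $I$ at $\gtrsim r$ well-separated places, and the slack in the second and third keeps those pokes and the interval $J$ rigid enough that a bounded number of translates captures them. By contrast the rectifiable half drops out immediately from the collision computation once a length-$\ell$ gap of $A$ inside $I$ has been found, and the boundary between the two cases is simply whether $2^{-10}|B|$ points can block every such gap.
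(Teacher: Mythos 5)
Your approach is genuinely different from the paper's. The paper partitions $I$ into five consecutive subintervals $I_{-2}\sqcup I_{-1}\sqcup I_0\sqcup I_1\sqcup I_2$ (with a randomly placed middle block $I_0$ that meets $A$ sparsely) and then imports two expectation bounds --- ``Claims A and B'' from the proof of \cite[Theorem~25]{BLT22} --- to lower-bound $\E_{b}|A+\{q_l,q_r,b\}|$ and $\E_{b_2,b_3}|A+\{q_l,b_2,b_3\}|$ directly, separating the contributions of $A\cap I_{-1,0,1}$ and of $A\setminus I_{-1,0,1}$. Your Case~1 (a run of $\ell=q_r-q_l$ consecutive integers inside $I$ missed by $A$) and the unwrapping maps $f_c,g$ correctly give rectifiability, and it is true, as you observe, that this already disposes of all $|B|\lesssim 2^{20}+2^{11}r$.

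The genuine gap is exactly the ``transfer step'' in Case~2, which you flag yourself, and I do not think the pigeonhole you sketch goes through in the regime where Case~2 can actually occur, namely $|B|\gtrsim 2^{20}+2^{11}r$. From $I_c\subseteq A+J$ one only gets $\sum_{b\in J}|(A+b)\cap I_c|\ge |I_c|\approx(2^{10}+2r)|B|$. There are up to $|J|-|B|\le 2^{-10}|B|$ inadmissible translates $b\in J\setminus B$, and for each of them $(A+b)\cap I_c$ can be as large as $|A\cap I|\le 2^{-10}|B|$ (since $x-b\in A\cap I$ whenever $x\in(A+b)\cap I_c$). So the inadmissible translates can soak up as much as $2^{-20}|B|^2$ of this mass, which dominates $|I_c|$ once $|B|\gg 2^{30}+2^{20}r$, say; and the same bound $|(A+b)\cap I_c|\le 2^{-10}|B|$ applies to every admissible translate as well, so no single $b_3\in B$ is forced to contribute much. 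In short, your lower bound is on $|A+J|$, but you need a lower bound on what a \emph{single} admissible translate can add beyond $A+\{q_l,q_r\}$, and the averaging over the $\approx|B|$ translates of $J$ dilutes the excess by a factor $|B|$ while the excess itself is only $O(r|B|+2^{10}|B|)$. This is precisely the quantitative obstruction that the paper's five-block decomposition and the two imported expectation claims are designed to circumvent: they control $\E_b|A+\{q_l,q_r,b\}|$ directly, splitting $A$ so that the ``central'' part $A_{-1,0,1}$ contributes nearly twice its size under two translates while the ``outer'' part $A_{-2,\infty,2}$ supplies the $|B|-1+r$ via the endpoints. You would need an analogous structural splitting (not just the one-parameter family of windows $x-J$) to make Case~2 close.
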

	
	\begin{theorem} [{\cite[Theorem 27]{BLT22}}] \label{thm:BLT_strong_stab_B_Zp}
		For every $\eps > 0$, there exists $\delta > 0$ such that for every $\alpha > 0$, there is a value $c$ for which the following holds. Let $A$ and $B$ be subsets of $\Fp$. Suppose that
			\[ 2 \le \min(|A|,|B|) ,\ \alpha |B| \le |A| \le \alpha^{-1} |B| \ \text{and} \ |A|+|B| \le (1-\eps)p \]
		and
			\[ \max_{B'\in B^{(c)}} |A+B'| = |A|+|B|-1+r \le |A|+|B|-1+ \delta \min(|A|,|B|) \]
		for some integer $r$. 
		
		Then $B$ is contained in an arithmetic progression of size $|B|+r$. Here $B^{(c)}$ stands for the set of all subsets of $B$ of size at most $c$. 
	\end{theorem}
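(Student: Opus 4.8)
The plan is to transport the statement into $\Z$ by a rectification argument and then appeal to the classical structure theory of integer sumsets. Choose $c$ large in terms of $\eps$ and $1/\alpha$, and $\delta$ small in terms of $\eps$, with both choices constrained so that the density budget $|A|+|B|\le(1-\eps)p$ suffices for the rectification step below. First I would extract bounded doubling: fix $B_0\subseteq B$ with $|B_0|\le c$ attaining $|A+B_0|=|A|+|B|-1+r$, so that, since $|A|$ and $|B|$ are comparable and $r\le\delta\min(|A|,|B|)$, one has $|A+B_0|\le K_0|A|$ with $K_0=1+(1+\delta)/\alpha$. The point is that, with $c$ large enough, the hypothesis that \emph{no} $B'\subseteq B$ of size $\le c$ pushes $|A+B'|$ past $|A|+|B|-1+\delta\min(|A|,|B|)$ forces $A$, $B$, and $A+B$ all to have doubling $O_{\eps,\alpha}(1)$: otherwise, in the spirit of \cite{BLT22} (this is the mechanism behind \Cref{thm:BLT_general_3pts}), a carefully chosen bounded-size $B'$ --- a fine net inside an interval containing $B$ once a crude localisation of $B$ is available, or a random such $B'$ --- would already give $|A+B'|>|A|+|B|-1+\delta\min(|A|,|B|)$, a contradiction. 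I would make this quantitative by combining the two-point estimates $|A\cup(A+d)|\le|A+(B_0\cup\{b\})|$ with the Ruzsa triangle inequality and \Cref{thm:Plunnecke--Ruzsa} to conclude $|A+A|,\,|B+B|,\,|A+B|\le K\max(|A|,|B|)$ for some $K=K(\eps,\alpha)$.

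Next I would rectify into $\Z$. With the doubling bounds above and $|A|+|B|\le(1-\eps)p$, for a suitable translate $t$ the set $A\cup(t+B)$ has doubling $O_K(1)$ and size at most $(1-\eps)p$, so the rectifiability principle of Green and Ruzsa~\cite{GR06} embeds it into $\Z$ as a Freiman isomorphism of order $3$; restricting gives injections $f\colon A\to\Z$ and $g\colon B\to\Z$ with $a+b=a'+b'$ if and only if $f(a)+g(b)=f(a')+g(b')$, hence $|A+B'|=|f(A)+g(B')|$ for \emph{every} $B'\subseteq B$. The full hypothesis therefore transfers verbatim to $f(A),g(B)\subseteq\Z$, and it suffices to prove the theorem over $\Z$.

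Over $\Z$, after rescaling so that the differences occurring in $A\cup B$ have gcd $1$, the two-point identity $|A+\{b_{\min},b_{\max}\}|=2|A|-|A\cap(A+D)|$ with $D=b_{\max}-b_{\min}$ already places $B$ in an interval of length at most $|B|+r+(\operatorname{span}(A)-|A|+1)$; choosing subsets $B'\subseteq B$ of size up to $c$ forming a fine net in $[b_{\min},b_{\max}]$ detects any long gap of $A$ via $|A+B'|>|A|+|B|-1+\delta|A|$, forcing $A$ to be essentially an interval, and then a Freiman $3k-4$ / Kneser analysis of the sumset $A+B$ --- whose hypothesis $|A+B|\le|A|+|B|-1+r$ is recovered because, once $B$ is localised, $A+B$ is covered by $A+B_0$ together with $O(r)$ further translates of $A$ --- upgrades this to: $A$ lies in an interval of length $|A|+r$ and $B$ in one of length $|B|+r$. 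Undoing the rescaling and the Freiman isomorphism, $B$ lies in an arithmetic progression of size $|B|+r$ in $\Fp$.

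The hard part is the interface between the hypothesis, which only controls $|A+B'|$ for $B'$ of bounded size, and the classical structure theorems, which speak of the full sumset $|A+B|$: one must show both that, once $B$ is crudely localised, a bounded number of well-chosen elements of $B$ already see all of $A+B$ up to an $O(r)$ error, and that any failure of $A$ to be interval-like is witnessed by some such $B'$. Balancing the choice of $c$ (large in $1/\alpha$ and $1/\delta$) and $\delta$ (small in $\eps$) against the density budget required for Green--Ruzsa rectification is the other delicate bookkeeping; the remaining sumset calculus is routine.
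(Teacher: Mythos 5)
This is an imported result --- Theorem 27 of~\cite{BLT22} --- that the paper states and uses but does not prove, so there is no in-paper proof to compare against. Evaluating your proposal on its own terms, the high-level plan (extract bounded doubling, rectify to $\Z$ via Green--Ruzsa, then invoke integer Freiman-type structure theory) is a natural thing to try, but the crucial first step is not substantiated and, as written, does not work.

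The problem is the extraction of ``bounded doubling for $A$, $B$, $A+B$'' from the hypothesis on $\max_{B'\in B^{(c)}}|A+B'|$. Your Pl\"unnecke--Ruzsa step does not do what you want: from $|A+B_0|\le K_0|A|$ with $|B_0|\le c$, \Cref{thm:Plunnecke--Ruzsa} controls iterated sums $mB_0 - nB_0$, which is vacuous (these sets have bounded size $O(c^{m+n})$ anyway); it gives no control on $|A+A|$ or $|B+B|$. Nor does the two-point identity: $|A+\{b_1,b_2\}|=2|A|-|A\cap(A+b_1-b_2)|\le|A|+|B|-1+r$ gives $|A\cap(A+d)|\ge|A|-|B|+1-r$ for $d\in B-B$, which is useless precisely in the regime the theorem is designed to handle, namely $|B|\ge|A|$ (perfectly allowed under $\alpha|B|\le|A|\le\alpha^{-1}|B|$), where the right side is nonpositive. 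The fallback --- ``a carefully chosen bounded $B'$ would already violate the bound'' --- is the whole content of the theorem, and the ``fine net inside an interval containing $B$'' presupposes a localisation of $B$ that is exactly the conclusion one is trying to prove. The later steps (rectification, $3k-4$, bounded $B'$ seeing $A+B$ up to $O(r)$ error) all rest on this missing foundation, and the last one, as you acknowledge, is itself nontrivial and circular as stated. The genuine BLT22 mechanism (visible in the proof of \Cref{thm:BLT_general_3pts} in the appendix) uses a delicate partition of the ambient interval into $I_{-2},\dots,I_2,I_\infty$ together with averaging over a random third point of $B$; there is no obvious way to substitute a Pl\"unnecke--Ruzsa/rectification reduction for that machinery, and your sketch does not supply one.
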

	
	Green and Ruzsa proved the following theorem, which states that if both the doubling constant and the size of a subset $A$ of $\F_p$ are small, then $A$ is rectifiable. 
	
	\begin{theorem}[\cite{GR06}] \label{thm:Green--Ruzsa}
		Suppose $p$ is a prime number, $k \ge 2$ is an integer, and $A \subseteq \F_p$ of size $\alpha p$. 
		If $|A+A| \le K|A|$ and $\alpha \le (16kK)^{-12K^2}$, then $A$ is rectifiable of order $k$. 
	\end{theorem}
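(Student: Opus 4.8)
Rectifiability of order $k$ (see \Cref{def:rectifiable}) asks for a Freiman $k$-isomorphism $A\to\Z$, and the natural candidate is the map sending $a$ to an integer representative of $\lambda a$ for a suitably chosen dilate $\lambda\in\Fp\sm\{0\}$. The plan is to first reduce the whole statement to the existence of such a $\lambda$, and then to construct $\lambda$ from the small-doubling hypothesis.

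\textbf{Step 1: reduction to a dilation statement.} I claim it suffices to find $\lambda\ne 0$ in $\Fp$ with $\lambda\cdot(A-A)$ contained in the interval $(-p/(2k),\,p/(2k))$ of $\Z/p\Z$, where residues are identified with their representatives in $(-p/2,p/2]$. Given such a $\lambda$, pick $a_0\in A$; then $\lambda A\subseteq \lambda a_0+\lambda(A-A)$ lies in a set $I$ of at most $\lceil p/k\rceil$ consecutive residues, so the map $f\cl A\to\Z$ sending $a$ to the representative of $\lambda a$ in $I$, shifted to land in $\{0,1,\dots,w-1\}$ with $kw\le p+k-1$, satisfies $f(a)\equiv\lambda a-c\pmod p$ for a fixed $c$. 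Then $f$ is injective (as $\lambda\ne0$), every $k$-fold sum $f(a_1)+\cdots+f(a_k)$ lies in $\{0,1,\dots,p-1\}$, and for two such sums $S,S'$ one has $S-S'\equiv\lambda\bigl(a_1+\cdots+a_k-a_1'-\cdots-a_k'\bigr)\pmod p$; hence $S=S'\iff S\equiv S'\pmod p\iff a_1+\cdots+a_k=a_1'+\cdots+a_k'$ in $\Fp$, which is exactly the defining equivalence. (That $\lambda(A-A)$ fit in a short interval is also essentially necessary, so nothing is lost in this reduction.)

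\textbf{Step 2: constructing the dilate.} By the Pl\"unnecke--Ruzsa inequality (\Cref{thm:Plunnecke--Ruzsa}) the hypothesis $|A+A|\le K|A|$ gives $|A-A|\le K^2|A|$ and, more generally, $O_K(|A|)$ bounds for all bounded sumsets of $A$, so $A$ has small doubling. Feeding this into a quantitative Freiman-type structure theorem (Freiman's theorem, or Chang's / a Bogolyubov--Ruzsa-type refinement, whichever yields the best explicit constants) produces a proper generalized arithmetic progression $P=\{x_0+\ell_1x_1+\cdots+\ell_rx_r\cl 0\le\ell_i<L_i\}$ of rank $r=r(K)$ and volume $|P|=L_1\cdots L_r\le V(K)\,|A|$ with $A\subseteq P$; since $\alpha$ is small this forces $|P|<p$, so $P$ does not wrap around $\Fp$ and $A-A\subseteq P-P$, a GAP of rank $r$ with lengths $2L_i-1$. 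The containment $\lambda(A-A)\subseteq(-p/(2k),p/(2k))$ is then implied by the $r$ conditions $\|\lambda x_i/p\|\le\tfrac{1}{4krL_i}$, since every element of $\lambda(P-P)$ then has representative of absolute value below $\sum_i(L_i-1)\tfrac{p}{4krL_i}<p/(2k)$. A pigeonhole (Dirichlet) argument over $\lambda\in\{0,1,\dots,Q\}$ with $Q$ slightly above $\prod_i(4krL_i)=(4kr)^rV(K)\alpha p$ produces such a nonzero $\lambda$, and this $\lambda$ is nonzero modulo $p$ precisely when $(4kr)^rV(K)\alpha<1$; inserting the explicit bounds $r=O(K^2)$ and $V(K)=\exp(O(K^2(\log K)^3))$ turns this into a constraint of the shape $\alpha\le(16kK)^{-12K^2}$. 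Dilating $A$ by the resulting $\lambda$ and applying Step 1 completes the proof.

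\textbf{Main obstacle.} Step 1 is routine. The delicate part is the quantitative control in Step 2: one needs a structure theorem whose rank and log-volume are bounded by an explicit polynomial in $K$, and one needs to run the simultaneous Diophantine approximation in a weighted form so that long, thin GAPs (for which a naive Dirichlet bound is hopelessly weak) are still handled correctly, all while keeping the loss from the factor $k$ and from $V(K)$ small enough that the final hypothesis on $\alpha$ is no stronger than $(16kK)^{-12K^2}$. Making these constants line up is the crux of the argument.
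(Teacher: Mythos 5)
This theorem is imported verbatim from Green and Ruzsa~\cite{GR06}; the paper does not prove it, so there is no internal proof to compare against, and I can only assess your sketch on its own terms. Your Step~1 is correct and standard: if $\lambda(A-A)$ sits inside $(-p/(2k),p/(2k))$ for some $\lambda\ne 0$, then lifting the representatives of $\lambda A$ gives a Freiman $k$-isomorphism in the sense of \Cref{def:rectifiable}. The difficulty is entirely in Step~2, and there I see two genuine gaps.

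First, there is a circularity risk in invoking ``Freiman's theorem'' for $A\subseteq\Fp$ with rank $r=r(K)$ and volume $V(K)$ depending only on $K$. All the structure theorems in $\Z/p\Z$ that give $K$-only bounds are normally \emph{deduced} from rectification: one rectifies $A$ into $\Z$ and then applies Freiman's theorem in $\Z$. The theorems one can prove directly in $\Z/p\Z$ without first rectifying (Bogolyubov, Chang-type spectral lemmas) produce Bohr sets whose dimension depends on $\log(1/\alpha)$, not on $K$ alone, and this changes the shape of the final inequality. You would need to exhibit a specific $\Z/p\Z$-structure theorem whose proof does not pass through rectification; the proposal does not do so. Second, even granting a Freiman theorem with $r=O(K^2)$ and $V(K)=\exp(O(K^2(\log K)^3))$, the Dirichlet step forces $\alpha\lesssim (4kr)^{-r}V(K)^{-1}=\exp(-\Omega(K^2(\log K)^3))$, which is a strictly \emph{smaller} (hence more restrictive) threshold than the stated $(16kK)^{-12K^2}=\exp(-12K^2\log(16kK))$. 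So, as written, your argument proves a weaker theorem: it covers only part of the range of $\alpha$ that the statement asserts. You flag this as ``the crux,'' which is honest, but it is not merely an accounting problem; it signals that the intended proof does not pass through the full Freiman theorem at all. Green and Ruzsa's actual argument is a direct one built on Pl\"unnecke--Ruzsa and a Fourier/covering step tailored to $\Z/p\Z$, which is how the quantitatively clean $(16kK)^{-12K^2}$ bound emerges; to make your Step~2 viable you would have to replace the black-box Freiman application with such a self-contained argument.
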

	
\section{Proofs} \label{sec:main_proof}
	\begin{proof} [Proof of \Cref{thm:main_Z_case}]
		(i) Since $|A| \ge |B|$ and the degree of each element in $B$ is at most $\D$, there exists $a \in A$ such that the degree $d(a) \le \D$. 
		
		Let $A = \{a_1, \cdots, a_m\}$ and $B = \{b_1, \cdots, b_n\}$, where $a_1 < a_2 < \cdots < a_m$ and $b_1 < b_2 < \cdots < b_n$. 
		
		If $d(a_1) \le \D$, consider the following $|A| + |B| - 1$ elements: 
			\[ a_1 + b_1, a_1 + b_2,\cdots,a_1 + b_n,\ a_2 + b_n,\cdots \ ,a_m + b_n, \]
		
		At most $2\D$ of these elements are missing from $A \rplus B$, so we obtain $|A \rplus B| \ge |A| + |B| - 1 - 2\D$. 
		
		If $d(a_1) > \D$, then by the averaging argument, there exists $a_i \in A$ such that $d(a_i) < \D$. Consider the following $|A|+|B|-1$ elements: 
			\[ a_1+b_1, a_2+b_1, \cdots, a_i+b_1,\ a_i+b_2, \cdots, a_i+b_n,\ a_{i+1}+b_n, \cdots, a_m+b_n. \]
		
		At most $d(b_1) + d(a_i) + d(b_n) \le 3\D - 1$ of these elements are missing from $A \rplus B$. Thus, $|A \rplus B| \ge |A + B| - (3\D - 1) \ge |A| + |B| - 3\D$. 
		
		(ii) The argument follows similarly to the first case in (i). 
	\end{proof}
	
	\begin{remark}
		There is a theorem from~\cite{BLT22} states that if $A$ and $B$ are two finite non-empty subsets of $\Z$ with $|A| \ge |B|$, then there exist elements $b_1,b_2,b_3 \in B$ such that $|A+\{b_1,b_2,b_3\}| \ge |A|+|B|-1$. 
		
		\Cref{thm:main_Z_case}(i) could also be derived with a minor modification in the proof of this theorem. However, for our specific purpose, the proof presented here is more direct and less involved. 
	\end{remark}
	
	\begin{remark}
		The bound $|A| + |B| - 1 - 2\D$ in \Cref{thm:main_Z_case}(ii) is tight. Consider the construction where $A = B = [n]$ and define $\R = \left( [\D] \times [\D] \right) \cup \left( \left\{ n-\D+1, \cdots, n \right\} \times \left\{ n-\D+1, \cdots, n \right\} \right)$. 
		
		In this case, $A+B = \{ \D+2, \D+3, \cdots, 2n-\D \}$, giving $|A + B| = 2n - 1 - 2\D$, which matches our bound exactly. 
	\end{remark}
	
	We need the following lemma showing any sufficiently small subset of $\Fp$ is rectifiable. 
	
	\begin{lemma} \label{lem:small_subset_rect}
		For any integer $n > 0$, if $p$ is a prime number satisfying $p > 4^n$, then any subset of $\Fp$ of size at most $n$ is rectifiable. 
	\end{lemma}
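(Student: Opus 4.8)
The plan is to exhibit the required injection directly, by dilating $A$ so that it fits into a short interval of $\Fp$ which can then be lifted faithfully to $\Z$; this is the standard route to rectifiability of small sets. Write $A=\{a_1,\dots,a_m\}$ with $m=|A|\le n$. The first step is to apply the pigeonhole (simultaneous) form of Dirichlet's approximation theorem to the $m$ reals $a_1/p,\dots,a_m/p$ with parameter $N=4$: this produces an integer $q$ with $1\le q\le 4^m$ such that $\lVert q a_i/p\rVert<1/4$ for every $i$, where $\lVert\cdot\rVert$ denotes distance to the nearest integer. This is exactly where the hypothesis $p>4^n$ is used: since $q\le 4^m\le 4^n<p$, the scalar $q$ is invertible modulo $p$, so $x\mapsto qx$ is a bijection of $\Fp$.

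Next, define $f\colon A\to\Z$ by letting $f(a_i)$ be the representative of $qa_i$ in the interval $(-p/2,p/2)$; the approximation bound says precisely that $|f(a_i)|<p/4$ for every $i$. Injectivity of $f$ is immediate from invertibility of $q$ together with uniqueness of centred residues. It then remains to verify the defining property of rectifiability. For $a,b,c,d\in A$ one has the chain of equivalences: $a+b=c+d$ in $\Fp$ $\iff$ $qa+qb\equiv qc+qd\pmod p$ (multiply by the unit $q$) $\iff$ $f(a)+f(b)\equiv f(c)+f(d)\pmod p$. The only substantive point is to promote this last congruence to an equality of integers: since each of $f(a),f(b),f(c),f(d)$ has absolute value less than $p/4$, the integer $f(a)+f(b)-f(c)-f(d)$ has absolute value less than $p$, and being divisible by $p$ it must be $0$. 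Hence $a+b=c+d$ in $\Fp$ if and only if $f(a)+f(b)=f(c)+f(d)$ in $\Z$, so $f$ witnesses that $A$ is rectifiable.

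I do not foresee a genuine obstacle; the only thing requiring care is that the argument is tight in the choice of the approximation parameter, which is why the bound takes the shape $4^n$. With $N=4$ the dilate lands in an interval of radius $<p/4$, so its sumset lands in an interval of length $<p$ and the congruence above lifts; with $N=3$ the sumset would have length roughly $4p/3>p$ and the lifting step would break, while with $N=5$ the bound $q\le 5^m$ would no longer be dominated by $p>4^n$. So the whole verification reduces to two elementary numerical facts — that $q<p$ and that four numbers of absolute value $<p/4$ have signed sum of absolute value $<p$ — both of which hold exactly on the nose.
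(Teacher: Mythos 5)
Your proposal is correct and takes essentially the same approach as the paper: both arguments use a pigeonhole principle (you invoke it in the guise of Dirichlet's simultaneous approximation theorem, the paper carries it out explicitly on the $p$ dilates of $A$) to produce a nonzero $t \in \Fp$ whose dilate $tA$ lands in the centred interval $(-p/4,\,p/4)$, and then lift via least absolute residues, using that a sum of four such residues has absolute value less than $p$. The only thing to flag is that the standard statement of Dirichlet gives $\|q\alpha_i\|\le 1/4$ rather than strict inequality, but since $\alpha_i=a_i/p$ with $p$ odd equality never occurs, so your strict bound is justified.
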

	
	\begin{proof}
		Let $X \subseteq \Fp$ be a subset of size at most $n$, We aim to show that there exists some $t \in \Fp^\times$ such that all elements in $tX$ have residues within the interval $[-\frac{p}{4}, \frac{p}{4}]$. This ensures that if $a+b \equiv c+d \pmod{p}$, then the equality holds in $\Z$ as well, proving rectifiability. 
		
		Consider the set of $p$ of vectors in $\{0,1,\cdots, p-1\}^{|X|}$, where the $i$-th vector is given by $(ix_1 \!\!\!\mod{p},\, ix_2 \!\!\!\mod{p},\, \cdots,\, ix_n \!\!\!\mod{p})$. Since $p > 4^n$, the pigeonhole principle guarantees that there exist distinct indices $i, j$ such that 
			\[ (ix_1 \hspace{-.8em}\mod{p},\, ix_2 \hspace{-.8em}\mod{p},\, \cdots,\, ix_n \hspace{-.8em}\mod{p}) - (jx_1 \hspace{-.8em}\mod{p},\, jx_2 \hspace{-.8em}\mod{p},\, \cdots,\, jx_n \hspace{-.8em}\mod{p}) \in \left[-\frac{p}{4}, \frac{p}{4}\right]^{|X|}. \]
		
		Setting $t = i - j$, we conclude that the function $f \cl x \mapsto \overline{tx}$ is a rectifying map for $A$, where $\overline{x}$ denotes the least absolute residue of $x$ modulo $p$. 
	\end{proof}
	
	Let $\alpha$ be the constant given in \Cref{thm:BLT_stab_2pts}, whose explicit value will be determined later. The proof of \Cref{thm:main_Fp_case} is divided into two cases: If $|B| \ge \alpha |A|$, we establish \Cref{thm:main_Fp_case} using a rectifiability argument due to Green and Ruzsa~\cite{GR06}. If $|B| < \alpha |A|$, we apply a modified technique developed by Bollob\'as, Leader, and Tiba~\cite{BLT22}.
	
	\begin{proof}[Proof of \Cref{thm:main_Fp_case}]
		Let $\alpha = 2^{-18} \cdot 3.1 \cdot 10^{-1549} \eps$ and take $c_\eps = (1 + \frac{1}{\alpha})^{-1} \left( 8(2\D + 3 + \frac{1}{\alpha}) \right)^{-24(2\D + 3 + \frac{1}{\alpha})^4}$, $p_0 = 4^{10\D}$. We will show that \Cref{thm:main_Fp_case} holds for $c_\eps$ and $p_0$. 
		
		We prove (i) and (ii) simultaneously by contradiction. Suppose that the conclusion does not hold, i.e. $|A \rplus B| < |A| + |B| -1 - 2\D$. 
		
		\textbf{Case 1:} $|B| \ge \alpha |A|$. 
		
		We aim to derive a contradiction by finding a rectifying map that lifts $A \cup B$ to $\Z$, allowing us to apply \Cref{thm:main_Z_case}. 
		
		Since each element in $B$ is connected to at most $\D$ elements in $A$ with respect to $\R$, we have
			\[ |A+B| \le |A \rplus B| + \D |B| < |A| + |B| - 1 - 2\D + \D |B| < \min \left\{ (\D+2)|A| , (\D + 1 + \frac{1}{\alpha})|B| \right\}. \]
		
		By Pl\"unneke--Ruzsa's inequality, it follows that 
			\[ |A+A| \le (\D+1 + \frac{1}{\alpha})^2 |B|, \quad |B+B| \le (\D+2)^2 |A|. \]
		
		Thus, 
		\begin{align*}
			| (A \cup B) + (A \cup B) | \le |A+A| + |B+B| + |A+B| &\le (\D+1 + \frac{1}{\alpha})^2 |B| + (\D+2)^2 |A| + (\D+2)|A| \\
				&\le (2\D + 3 + \frac{1}{\alpha})^2 |A| \\
				&\le (2\D + 3 + \frac{1}{\alpha})^2 |A \cup B|. 
		\end{align*}
		
		Since $\alpha |A| \le |B|$, we have 
			\[ |A \cup B| \le (1 + \frac{1}{\alpha}) |B| \le (1 + \frac{1}{\alpha}) c_\eps p = \left( 8(2\D + 3 + \frac{1}{\alpha}) \right)^{-24(2\D + 3 + \frac{1}{\alpha})^4}p. \]
			
		By \Cref{thm:Green--Ruzsa}, $A \cup B$ is rectifiable. 
		
		Let $f \cl A \cup B \to \Z$ be a rectifying map. We lift the relation $\R \subseteq A \times B$ to $\widetilde{\R} \subseteq f(A) \times f(B)$ along $f$. Then we have $|A \rplus B| = |f(A) \rtplus f(B)|$. Applying \Cref{thm:main_Z_case} we get the desired result, thus completing this case.
		
		\textbf{Case 2:} $|B| < \alpha |A|$. 
		
		If $|B| = 1$, the result is trivial. Assume $|B| \ge 2$. If there exist $b_1, b_2 \in B$ such that $|A + \{b_1, b_2\}| \ge |A| + |B| - 1$, then, by degree boundedness, we have $|A \rplus B| \ge |A| + |B| -1 - 2\D$. Setting\footnote{It is worth noting that we need to verify that the $\alpha$ defined here meets the requirements of \Cref{thm:BLT_stab_2pts}. } $\gamma = 2^{-10}$ and $t = 2^{10} + 2\D$ in \Cref{thm:BLT_stab_2pts}, we obtain arithmetic progressions $P$ and $Q$ with the same common difference satisfying 
			\[|P| \ge (2^{10} + 2\D)|B|, \quad |Q| \le \lfloor (1+2^{-10})|B| \rfloor, \quad |A \cap P| \le 2^{-10} |B|,\quad \text{and} \quad B \subseteq Q.\]
		
		By rescaling $A, B$ and shrinking $Q$ if necessary, we may assume the common difference of $P$ and $Q$ is $1$, and the endpoints of $Q$  belonging to $B$. 
		
		If $|A| < 8\D$, then $|A \cup B| < (1 + \alpha)|A| < 10\D$. Furthermore, since the second assumption $p > p_0$ holds in this case, by \Cref{lem:small_subset_rect}, we know that $A \cup B$ is rectifiable. This allows us to apply \Cref{thm:main_Z_case} and conclude the proof. 
		
		If $|A| \ge 8\D$, by \Cref{thm:BLT_general_3pts}, we conclude that either there exist $b_1, b_2, b_3 \in B$ such that $|A + \{b_1, b_2, b_3\}| \ge |A| + |B| - 1 + \D$, or $(A, B)$ is a rectifiable pair. In the first case, we have $|A \rplus B| \ge |A \rplus \{b_1, b_2, b_3\}| \ge |A| + |B| - 1 - 2\D$. In the latter case, applying again \Cref{thm:main_Z_case} yields the desire result, completing the proof. 
	\end{proof}
	
	\begin{proof} [Proof of \Cref{thm:main_Fp_case_strong}]
		Assume without loss of generality that $|B| \le |A|$. Define $\alpha = 2^{-18} \cdot 3.1 \cdot 10^{-1549} \eps$. The case where $|B| < \alpha |A|$ has already been addressed in the proof of \Cref{thm:main_Fp_case}, so we may assume that $|B| \ge \alpha |A|$. 
		
		Let $c$ and $\delta$ be the constants given by \Cref{thm:BLT_strong_stab_B_Zp} for parameters $\eps$ and $\alpha$. Without loss of generality, assume $\delta < 1$. Define $r = c\D$ and set $\beta = \frac{c}{\delta \alpha}$, $p_0 = 4^{2\beta\D}$. Now, assume either $|A| \ge \beta\D$ or $p > p_0$. 
		
		If $|A| < \beta\D$, we have $p > p_0 = 4^{2\beta\D}$ and $|A \cup B| \le |A| + |B| \le 2|A| < 2\beta\D$. Hence, by \Cref{lem:small_subset_rect}, $A \cup B$ is rectifiable. Then apply \Cref{thm:main_Z_case}, we get the desired result. 
		
		Now assume that $|A| \ge \beta\D$. In this case, we have $r \le \delta \cdot \min (|A|, |B|) = \delta |B|$, since otherwise we would have 
			\[ |A| \le \frac{1}{\alpha} |B| < \frac{r}{\delta \alpha} = \frac{c\D}{\delta \alpha} = \beta\D, \]
		a contradiction. 
		
		Besides, since $p \ge |A|$, we have
		\begin{equation} \label{eqn:eps_p}
			\eps p \ge \eps|A| \ge \eps\beta\D = \frac{\eps c\D}{\delta\alpha} > (3c+1)\D = 3r + \D.
		\end{equation}
		
		Let $A^{(c)}$ denote the set of all subsets of $A$ of size at most $c$. We may further assume that 
			\[ \max_{A'\in A^{(c)}}|A'+B| \le |A|+|B|-1+r, \] 
		since otherwise, we would have 
		\begin{align*}
			\max_{A' \in A^{(c)}}|A' \rplus B| &\ge \max_{A' \in A^{(c)}} |A' + B| - c\D \ge |A| + |B| - 1 + r - c\D \\
				&= |A| + |B| - 1 \\
				&> |A| + |B| - 1 - 2\D,
		\end{align*}
		which would directly lead to the desired result. 
		
		Thus, the assumptions\footnote{Here, we have swapped the labels of sets $A$ and $B$ when applying \Cref{thm:BLT_strong_stab_B_Zp}. } of \Cref{thm:BLT_strong_stab_B_Zp} hold, implying that $A$ is contained in an arithmetic progression $I$ of size $|A| + r$. By multiplying an element from $\Fp^\times$, we may assume $I$ is an interval. 
		
		If there exist two elements $b_1, b_2 \in B$ such that the distance between them (i.e., $\min(b_1 - b_2 \hspace{-.4em}\mod{p},\, b_2 - b_1 \hspace{-.4em}\mod{p})$) is at least $|B| + r$, then we have
			\[ |A + \{b_1, b_2\}| \ge \min (|I| + |B| + r - 2r, 2|A|) \ge |A| + |B|. \]
		
		Thus, 
			\[ |A \rplus B| \ge |A \rplus \{b_1, b_2\}| \ge |A| + |B| - 2\D. \]
		
		If there is no such pair, suppose $b_1, b_2 \in B$ be the elements with the maximum distance among elements of $B$, and let $J$ be the minor arc between them (i.e., the interval ending at $b_1$ and $b_2$ with size smaller than $\frac{p}{2}$). 
		
		If $B \subseteq J$, then $B$ is contained in an interval of length at most 
			\[ |J| \le |B| + r - 1 < p - |A| - r = p - |I|, \]
		where the second inequality follows from $|A| + |B| \le (1-\eps)p$ and (\ref{eqn:eps_p}). By translating $I$ and $J$, we may assume that both intervals start from $0$. In this case, the sum of the maximum elements of $A$ and $B$ is smaller than $p$. Therefore, $(A, B)$ forms a rectifiable pair. 
		
		If $B \nsubseteq J$, let $b_3 \in B \setminus J$. By the maximality of the distance between $b_1$ and $b_2$, the minor arcs between $b_1 b_3$ and $b_2 b_3$ are not contained in $J$. Hence, the union of the minor arcs between $b_1b_2$, $b_2b_3$, and $b_1b_3$ covers all of $\Fp$, and the length of each minor arc is at most $|B| + r \le |A| + r$. This implies that $I + \{b_1, b_2, b_3\} = \Fp$. Therefore, 
		\begin{align*}
			|A \rplus B| &\ge |A \rplus \{b_1, b_2, b_3\}| \ge |I + \{b_1, b_2, b_3\}| - 3r - 3\D \\ 
			&\ge |\Fp| - 3r - 3\D = p-3r - 3\D \\
			&\ge |A| + |B| - 1 - 2\D, 
		\end{align*} 
		where the last inequality again follows from $|A| + |B| \le (1-\eps)p$ and (\ref{eqn:eps_p}). 
	\end{proof}
	
\section{Constructions} \label{sec:constructions}
	We prove \Cref{thm:Z_construction} first. Recall that in proving \Cref{thm:Z_construction}, we need to construct two subsets $A, B \subseteq \Z$, both of size $n$, along with a relation $\R$ of bounded degree $\D$ on $B$, such that $|A \rplus B| = |A| + |B| - 1 - \left\lfloor \frac{5\D}{2} \right\rfloor$. 
	
	\begin{proof} [Proof of \Cref{thm:Z_construction}]
		Let $r = \lfloor \frac{\D}{2} \rfloor$. Define 
			\[ A = \{1,2,\cdots, n-\D, n-\D+r+1, \cdots, n+r\},\quad B = \{1,2,\cdots, n\}. \]
		
		Then $|A| = |B| = n$, and $A + B = \{2, 3, \cdots, 2n+r\}$. Define 
			\[ C \eqdef \{n-\D+1, \cdots, n-\D+r\} + \{1,n\} = \{n-\D+2, \cdots, n-\D+r+1\} \cup \{2n-\D+1, \cdots, 2n-\D+r\}, \]
			\[ D \eqdef \{2,\cdots,\D+1\} \cup \{2n+r-\D+1, \cdots 2n+r\}. \]
			
		Then $C, D$ are disjoint subsets of $A+B$, and for any $b \in B$ we have $|(A + b) \cap C| + |(A + b) \cap D| \le \D$. We associate each $b$ with those $a \in A$ such that $a + b \in C \cup D$ in $\R$. Then each element in $B$ connects with at most $\D$ elements in $A$, and 
			\[ |A \rplus B| = |(A+B) \setminus (C \cup D)| = (2n+r-1) - (2r + 2\D) = 2n-1-\left\lfloor \frac{5\D}{2} \right\rfloor. \]
	\end{proof}
	
	The idea behind this construction is to remove $r$ additional elements from the trivial construction, which achieves $|A \rplus B| = |A| + |B| - 1 - 2\D$ by taking $A$ and $B$ as two intervals and deleting $\D$ elements near both endpoints of $A+B$. 
	
	Let $r$ be an arbitrary integer for now. The bottleneck of this construction is that there is a translation $A + b$ containing $C$, which consists of the union of two intervals $C_1 \cup C_2$ with total size $2r$. This forces us to impose the condition $r \le \lfloor \frac{\D}{2} \rfloor$. 
	
	One may attempt to improve this by removing $r$ elements evenly across $A$ rather than a contiguous interval. If we do this, each translation of $A$ would cover at most $r+1$ deleted elements. This seems to be a wiser approach since we can take $r = \D - 1$, yielding $|A| = |B| = n$ and $|A \rplus B| = 2n-3\D$. However, at this point, we would fail to remove $\D$ elements around the the endpoints of $A + B$ from $A \rplus B$. This explains why we have to remove an interval and why $|A \rplus B| = |A| + |B| - 1 - \left\lfloor \frac{5\D}{2} \right\rfloor$ is optimal concerning this idea. 
	
	We believe this construction is tight. A potential proof approach is to apply stability results for $|A + B| \ge |A| + |B| - 1$ to first establish that $A$ and $B$ look like intervals. Next, we analyze $|A + \{b_{\min}, b_{\max}\}|$ which should approximately as large as $|A| + |B|$. Then we study the translations $A + b$ with $b$ is near $b_{\min}, b_{\max}$ and where $b$ is an appropriately chosen interior element from $B$ respectively. 
	
	We now prove \Cref{thm:counterexample_Lev}(i). 
	
	\begin{proof}[Proof of  \Cref{thm:counterexample_Lev}(i)]
		Let 
			\[ A = \{0, k, 2k, \cdots, \ell k, (\ell+1)k, (\ell+1)k+1, \cdots, p-1\},\ B = \{0, -1, -2, \cdots, -(\ell k + 1)\}. \] 
		
		Then we have $|A| = p - (\ell+1)k + \ell + 1 = p - (k-1)\ell - k + 1$ and $|B| = \ell k + 2$. 
		
		Denote $C = \{0,1, \cdots, k-1\}$. For each $b \in B$, we have $|(A + b) \cap C| = 1$. Hence, there is a way to associate each element in $B$ with an element $a \in A$ when constructing $\R$ so that $(A \rplus B) \cap C = \varnothing$. This implies $|A \rplus B| \le |\Fp \setminus C| = p-k$. 
	\end{proof}
	
	Although this construction is quite simple, I would like to briefly explain the underlying idea. The key insight is to first consider the \emph{avoiding set} $\mF \eqdef \Fp \setminus (A \rplus B)$. Given $A$ and $\mF$, we know that $B$ must be a subset of $\{b \cl |(\mF - b) \cap A| \le 1\}$. For optimality, we may take $B$ to be the entire set. This reduces the problem to the following: Given the sizes of $A$ and $\mF$, determine the maximum size of $\{b \cl |(\mF - b) \cap A| \le 1\}$. 
	
	Suppose $k = |\mF|$, and let $r_i$ denote the number of translations of $\mF$ such that $|(\mF + x) \cap A| = i$. Then, we have
	\begin{equation} \label{eqn:sum_of_r}
		r_0 + r_1 + \cdots + r_k  = p, 
	\end{equation}
	and
	\begin{eqnarray} \label{eqn:weighted_sum_of_r}
		r_1 + 2r_2 + \cdots + kr_k = k|A|. 
	\end{eqnarray}
	
	Moreover, we know that $|B| = r_0 + r_1$. 
	
	Let us examine the case where $k = 3$. Consider an arbitrary subset $A \subseteq \Fp$ of size approximately $\frac{p}{2}$, and let $\mF$ be a subset of $\Fp$ of size $3$. Define the sequences $(r_0, r_1, r_2, r_3)$ and $(r_0', r_1', r_2', r_3')$ corresponding to $A$ and $\Fp \setminus A$ respectively. 
	
	Since a translation of $\mF$ that contains exactly $i$ elements of $A$ must contain exactly $3-i$ elements of $\Fp \setminus A$, we obtain $(r_0', r_1', r_2', r_3') = (r_3, r_2, r_1, r_0)$. Furthermore, adding a new element to $A$ decreases both $r_0$ and $r_1$ by at most $3$. 

	Consequently, if we always have $|A \rplus B| \ge p-2$ for any subsets $A, B$ satisfying $|A| + |B| \ge p + O(1)$ and any relation $\R$ a mapping from $B$ to $A$, then we must have 
		\[ r_0 + r_1 \le \frac{p}{2} + O(1),\quad r_2 + r_3 \le \frac{p}{2} + O(1),\]
	which implies 
		\[ \frac{p}{2} - O(1) \le r_0 + r_1 \le \frac{p}{2} + O(1) \tag{4} \label{eqn:r0+r1}\]
	for any $A \subseteq \Fp$ of size $\frac{p}{2} \pm O(1)$. 
	
	Fixing $\mF$, this condition is highly restrictive, as each $r_i$ is the sum of indicator functions for the event that a translation of $\mF$ contains exactly $i$ elements of $A$. Each of these indicator functions depends only on a constant number of elements being included or excluded from $A$, meaning that selecting an element $a \in A$ affects only a constant number of terms in the summation. Consequently, the distribution of $r_0 + r_1$ should approximately follow a normal distribution, suggesting that the condition (\ref{eqn:r0+r1}) is too rigid to hold universally. 
	
	The following lemma is helpful in understanding the constructions of \Cref{thm:counterexample_Lev}(ii) and (iii). 
	
	\begin{lemma} \label{lem:large_density_cover}
		There exists a set $A \subseteq \Z$ with period $11$ and density $\frac{6}{11}$ such that for each element $a \in A$, there exists a translation of $\{0,1,4\}$ that intersects $A$ only at $a$. 
	\end{lemma}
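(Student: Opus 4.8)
The property required of $A$ is unchanged if $A$ is translated, and $A$ is to have period $11$; so the statement is equivalent to a finite one in $\Z/11\Z$: produce a $6$-element set $A_0 \subseteq \Z/11\Z$ so that for each $a \in A_0$ some translate $\{t,t+1,t+4\}$ satisfies $\{t,t+1,t+4\} \cap A_0 = \{a\}$, and then put $A = \{n \in \Z : (n \bmod 11) \in A_0\}$. (Over $\Z$, a translate $\{t,t+1,t+4\}$ of $\{0,1,4\}$ meets $A$ in exactly those of its three entries whose residues modulo $11$ lie in $A_0$, since $0,1,4$ are distinct mod $11$; hence a witnessing translate mod $11$ lifts to one over $\Z$.)

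To pin down $A_0$ I would use a short count. For $t \in \Z/11\Z$ say $\{t,t+1,t+4\}$ is an $i$-translate if it meets $A_0$ in $i$ points, and let $m_i$ be the number of $i$-translates. An element of $A_0$ is covered exactly when one of the three translates through it is a $1$-translate, and each $1$-translate covers only its single element; since all six elements of $A_0$ must be covered, this forces $m_1 \ge 6$, i.e.\ the $1$-translates match up bijectively with the elements of $A_0$. Now each point of $\Z/11\Z$ lies in exactly three translates, so $m_0+m_1+m_2+m_3 = 11$; moreover $\{0,1,4\}$ is a Sidon set in $\Z/11\Z$, so every pair of $A_0$ at distance $1$, $3$ or $4$ lies in a unique translate and no other pair lies in any, giving $m_2+3m_3 = Q$ with $Q$ the number of such pairs. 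Eliminating $m_2$ yields $m_1 = 11 - m_0 + 2m_3 - Q$. Since one checks that $m_3 \le 3$ and that $m_3 = 3$ forces $m_0 \ge 1$, attaining $m_1 \ge 6$ essentially forces $m_0 = 0$ and $m_3 = 2$: the complement of $A_0$ must avoid every translate of $\{0,1,4\}$, while $A_0$ itself must contain exactly two of them.

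Guided by this, I would take $A_0 = \{0,1,2,4,5,10\}$, i.e.\ $A = \{n \in \Z : n \equiv 0,1,2,4,5\text{ or }10 \pmod{11}\}$. Then $A_0$ contains the translates $\{0,1,4\}$ and $\{1,2,5\}$, its complement $\{3,6,7,8,9\}$ contains no translate of $\{0,1,4\}$, and the six translates
\[ \{7,8,0\},\quad \{8,9,1\},\quad \{2,3,6\},\quad \{3,4,7\},\quad \{5,6,9\},\quad \{6,7,10\} \]
meet $A_0$ in $\{0\}$, $\{1\}$, $\{2\}$, $\{4\}$, $\{5\}$, $\{10\}$ respectively — so every element of $A_0$ is witnessed, which is what we wanted.

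The real work is finding $A_0$: most $6$-subsets of $\Z/11\Z$ contain an element whose three containing translates all meet the set a second time, and the constraint is tight — $m_1 = 6$ leaves no room for a wasted $1$-translate. The counting identity above is what reduces the search to a brief case analysis rather than an exhaustive one.
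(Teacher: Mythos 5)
Your proof is correct and takes essentially the same approach as the paper: an explicit set (you use $A_0 = \{0,1,2,4,5,10\}$ where the paper uses $\{0,1,2,3,5,6\}$, but both satisfy the requirement) together with a direct verification of the six witnessing translates; the reduction to $\Z/11\Z$ and the lift back to $\Z$ are handled correctly. The accompanying counting heuristic is a pleasant extra but not needed for the argument.
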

	
	\begin{proof}
		Consider the set $A = \{0,1,2,3,5,6\} + 11\Z$. Then, we observe that 
			\[ (\{0,1,4\} + (-4)) \cap A = \{0\},\ (\{0,1,4\} + (-3)) \cap A = \{1\},\ (\{0,1,4\} + (-2)) \cap A = \{2\}, \]
			\[ (\{0,1,4\} + 3) \cap A = \{3\},\ (\{0,1,4\} + 4) \cap A = \{5\},\ (\{0,1,4\} + 6) \cap A = \{6\}. \]
			
		Thus, $A$ satisfies the required condition. 
	\end{proof}
	
	For the case where the degree is bounded on both sides, adding the constraint that each element in $A$ has degree at most $1$ further reduces the maximum possible size of $B$ to $r_0 + |\{a \in A \cl \exists x \text{ such that } (\mF + x) \cap A = \{a\}\}|$. The second term is at most $r_1$. This loss is caused by each element in $A$ can be only connected to at most $1$ element in $B$. 
	
	Lev~\cite{Lev00-ii} provided a construction with $|A| = \frac{p+1}{2}$ and $|A \rplus A| = p-3$, demonstrating that the condition $|A| + |B| \ge p+1$ is insufficient to guarantee $|A \rplus B| \ge p-2$. The sum of sizes of this construction has one extra gain from $p$. The key idea behind \Cref{thm:counterexample_Lev}(ii) and (iii) is that we can replicate Lev's construction to amplify this gap, given that the initial sum $|A| + |B|$ exceeds $p$. However, if the initial value of $|A| + |B| - p$ were negative, this approach would offer no advantage. This is why such techniques are effective only when $|A| + |B| > p$, and cannot be extended to the $|A| + |B| = (1-\eps)p$ regime in the first construction. 
	
	We are now ready to prove \Cref{thm:counterexample_Lev}(ii) and (iii) 
	
	\begin{proof}
		We begin with the proof of \Cref{thm:counterexample_Lev}(ii). 
		
		The construction differs slightly, depending on whether $\lfloor \frac{p}{11} \rfloor$ is even or odd. 
		
		If $\lfloor \frac{p}{11} \rfloor$ is even, suppose $\lfloor \frac{p}{11} \rfloor = 2t$. Set 
			\[ A = \Big( \{0,1,2,3,5,6\} + 11 \cdot \{-(t-1), -(t-2), \cdots, -1,0,1, \cdots, t-1\} \Big) \cup \big\{-11t+3, -11t+5, -11t+6 \big\}, \] 
		and define $\R$ as
		\begin{align*}
			\R &= \bigcup_{i = -t+1}^{t-1} \big\{(11i, -11i+2), (11i+1, -11i+1), (11i+2, -11i) \big\} \\
			&\qquad\quad \cup \bigcup_{i = -t}^{t-1} \big\{(11i+3, -11(i+1)+6), (11i+5, -11(i+1)+5), (11i+6, -11(i+1)+3) \big\}. 
		\end{align*} 
		
		If $\lfloor \frac{p}{11} \rfloor$ is odd, suppose $\lfloor \frac{p}{11} \rfloor = 2t+1$. Set
			\[ A = \Big( \{0,1,2,3,5,6\} + 11 \cdot \{-t, -(t-2), \cdots, -1,0,1, \cdots, t-1\} \Big) \cup \big\{11t, 11t+1, 11t+2 \big\}, \] 
		and define $\R$ as
		\begin{align*}
			\R &= \bigcup_{i = -t}^{t} \big\{(11i, -11i+2), (11i+1, -11i+1), (11i+2, -11i) \big\} \\
			&\qquad\quad \cup \bigcup_{i = -t}^{t-1} \big\{(11i+3, -11(i+1)+6), (11i+5, -11(i+1)+5), (11i+6, -11(i+1)+3) \big\}. 
		\end{align*}
		
		In both cases, $|A| = 6 \lfloor \frac{p}{11} \rfloor - 3$, $\R$ is a symmetric matching within $A$, and $|A \rplus A| = |\Fp \setminus \{-2,-1,2\}| = p-3$. 
		
		We now proceed to prove (iii). Without loss of generality, we may assume $\eps \le \frac{6}{11}$. Let $t = \lfloor \frac{\eps p}{12} \rfloor$ and $\delta = \frac{\eps}{6}$. Set
			\[ B = \Big( \{0,1,2,3,5,6\} + 11 \cdot \{-(t-1), -(t-2), \cdots, -1,0,1, \cdots, t-1\} \Big) \cup \big\{-11t+3, -11t+5, -11t+6 \big\}, \]
			\[ A = B \cup \{11t, 11t+1, \cdots, p - 11t-1\}, \]
		and define $\R$ as
		\begin{align*}
			\R &= \bigcup_{i = -t+1}^{t-1} \big\{(11i, -11i+2), (11i+1, -11i+1), (11i+2, -11i) \big\} \\
			&\qquad\quad \cup \bigcup_{i = -t}^{t-1} \big\{(11i+3, -11(i+1)+6), (11i+5, -11(i+1)+5), (11i+6, -11(i+1)+3) \big\}. 
		\end{align*} 
		
		Then we have $|A| = (1-\frac{5\eps}{6})p - O(1)$, $|B| = \eps p - O(1)$, and $A \rplus B = \Fp \setminus \{-2,-1,2\}$. This completes the proof. 
	\end{proof}
	
	\begin{remark}
		If we could replace the set $\{0,1,4\}$ in \Cref{lem:large_density_cover} with any set of integers of size $4$ or more while keeping the upper density of $A$ greater than $\frac{1}{2}$, then the constructions in \Cref{thm:counterexample_Lev}(ii) and (iii) could be improved to satisfy $|A \rplus B| \le p-4$. 
	\end{remark}
	
	\begin{proof} [Proof of \Cref{thm:sum_greater_than_p_case}]
		We prove the contrapositive. Suppose $|A \rplus B| \le p-k$. Let $\mF$ be a subset of $\Fp \setminus (A \rplus B)$ of size $k$. Denote $r_i \eqdef \left| \left\{ x \in \Fp \cl |(\mF + x) \cap A| = i \right\} \right|$. Then we have (\ref{eqn:sum_of_r}) and (\ref{eqn:weighted_sum_of_r}). Hence
			\[ r_1 + 2r_2 + \cdots + kr_k \le (r_0 + r_1) + k(r_2 + \cdots + r_k) = (r_0 + r_1) + k(p - r_0 - r_1) = kp - (k-1)(r_0 + r_1). \]
		
		So we have
			\[ kp - (k-1)(r_0 + r_1) \ge k|A|. \]
		
		Since $|B| \le r_0 + r_1$, we conclude
		\begin{align*}
			|A| + |B| &= |A| + \frac{2k-2}{2k-1}|B| + \frac{1}{2k-1}|B| \le |A| + \frac{2k-2}{2k-1}(r_0 + r_1) + \frac{1}{2k-1}|A| \\
			&\le |A| + \frac{2k-2}{2k-1} \cdot \frac{kp - k|A|}{k-1} + \frac{1}{2k-1}|A| \\ 
			&= \frac{2kp}{2k-1}. 
		\end{align*}
		
		This completes the proof. 
	\end{proof}
	
\section{Concluding Remarks}
	In this paper, we studied general restricted sumsets over $\Fp$ and compared their behavior with that in the Erdős–Heilbronn theorem. When $\R$ is a matching, the two settings behave similarly in the regime $|A| + |B| \le (1-\eps)p$; however, when $|A| + |B| > p$, we provided an example illustrating their difference. Nevertheless, the avoiding set $\mF \ce \Fp \setminus (A +_\R B)$ remains an interesting object of study in the range $|A| + |B| > p$. It was shown in~\cite{Lev00-ii} that $\mF$ is a Sidon set and hence satisfies the bound $|\mF| < \sqrt{p} + \frac{1}{2}$, which is currently the best known. Any construction achieving $|\mF| = \omega(1)$, or any improvement proving $|\mF| = o(\sqrt{p})$, would be of interest. 
	
\section*{Acknowledgements}
	I would like to thank Vsevolod Lev for several helpful comments on earlier versions of this paper and for bringing \cite{BLT22} to my attention. I also benefited from valuable discussions with Boris Bukh and Zichao Dong. This work was initiated during the $2^{\text{nd}}$ IBS ECOPRO Student Research Program in the summer of 2024. I am grateful to Hong Liu for hosting my visit to IBS as a student researcher. 
	
	\bibliographystyle{alpha}
	{\small \bibliography{reference}}

\begin{thebibliography}{DdSH94}

\bibitem[Alo99]{Alo99}
N.~Alon.
\newblock Combinatorial {N}ullstellensatz.
\newblock {\em Combin. Probab. Comput.}, 8(1-2):7--29, 1999.

\bibitem[ANR95]{ANR95}
N.~Alon, M.~B. Nathanson, and I.~Z. Ruzsa.
\newblock Adding distinct congruence classes modulo a prime.
\newblock {\em Amer. Math. Monthly}, 102(3):250--255, 1995.

\bibitem[ANR96]{ANR96}
N.~Alon, M.~B. Nathanson, and I.~Z. Ruzsa.
\newblock The polynomial method and restricted sums of congruence classes.
\newblock {\em J. Number Theory}, 56(2):404--417, 1996.

\bibitem[BLT25]{BLT22}
B.~Bollob{\'a}s, I.~Leader, and M.~Tiba.
\newblock Large sumsets from small subsets.
\newblock {\em Israel Journal of Mathematics}, 2025.

\bibitem[DdSH94]{DH94}
J.~A. Dias~da Silva and Y.~O. Hamidoune.
\newblock Cyclic spaces for {G}rassmann derivatives and additive theory.
\newblock {\em Bull. London Math. Soc.}, 26(2):140--146, 1994.

\bibitem[EH64]{EH64}
P.~Erd{\H{o}}s and H.~Heilbronn.
\newblock On the addition of residue classes {${\rm mod}\ p$}.
\newblock {\em Acta Arith.}, 9:149--159, 1964.

\bibitem[GR06]{GR06}
B.~Green and I.~Z. Ruzsa.
\newblock Sets with small sumset and rectification.
\newblock {\em Bull. London Math. Soc.}, 38(1):43--52, 2006.

\bibitem[K{\'a}r09]{Kar09}
G.~K{\'a}rolyi.
\newblock Restricted set addition: the exceptional case of the
  {E}rd{\H{o}}s-{H}eilbronn conjecture.
\newblock {\em J. Combin. Theory Ser. A}, 116(3):741--746, 2009.

\bibitem[Lev00a]{Lev00-i}
V.~F. Lev.
\newblock Restricted set addition in groups. {I}. {T}he classical setting.
\newblock {\em J. London Math. Soc. (2)}, 62(1):27--40, 2000.

\bibitem[Lev00b]{Lev00-ii}
V.~F. Lev.
\newblock Restricted set addition in groups. {II}. {A} generalization of the
  {E}rd{\H{o}}s-{H}eilbronn conjecture.
\newblock {\em Electron. J. Combin.}, 7:Research Paper 4, 10, 2000.

\bibitem[Lev01]{Lev01-iii}
V.~F. Lev.
\newblock Restricted set addition in groups. {III}. {I}nteger sumsets with
  generic restrictions.
\newblock {\em Period. Math. Hungar.}, 42(1-2):89--98, 2001.

\bibitem[Pet12]{Pet12}
G.~Petridis.
\newblock New proofs of {P}l{\"u}nnecke-type estimates for product sets in
  groups.
\newblock {\em Combinatorica}, 32(6):721--733, 2012.

\bibitem[Pl{\"u}70]{Plu70}
H.~Pl{\"u}nnecke.
\newblock Eine zahlentheoretische anwendung der graphentheorie.
\newblock {\em Journal f{\"u}r die reine und angewandte Mathematik},
  243:171--183, 1970.

\bibitem[PS02]{PS02}
H.~Pan and Z.-W. Sun.
\newblock A lower bound for {$|\{a+b\colon a\in A,\ b\in B,\ P(a,b)\neq0\}|$}.
\newblock {\em J. Combin. Theory Ser. A}, 100(2):387--393, 2002.

\bibitem[Ruz89]{Ruz89}
I.~Z. Ruzsa.
\newblock An application of graph theory to additive number theory.
\newblock {\em Sci. Ser. A Math. Sci. (N.S.)}, 3:97--109, 1989.

\end{thebibliography}
	
\appendix
\section{Proof of \texorpdfstring{\Cref{thm:BLT_general_3pts}}{Theorem~\ref{thm:BLT_general_3pts}}} \label{apdx}
	\begin{proof}
		By the assumptions, there is a partition $I = I_{-2}\sqcup I_{-1}\sqcup I_0\sqcup I_1\sqcup I_2$ of the interval $I$ into five consecutive disjoint subintervals, such that $p_l \in I_{-2} \text{ and } p_r \in I_2$, $ |I_0|=|I_{-2}|= |I_{2}| = |J|$, and $ |I_0\cap A| \le 2^{-4} |(I_{-1} \sqcup I_0 \sqcup I_1)\cap A|$. 
		This partition can be constructed by placing $I_{-2}, I_2$ at the endpoints of $I$ and selecting $I_0$ as a random subinterval within $I \setminus (I_{-2} \sqcup I_2)$. 

		Let $I_{\infty}$ be the complement of $I$, i.e. $I_{\infty}=I^c$. For a subset $S\subseteq \{-2,-1,0,1,2, \infty\}$ write $A_S = \sqcup_{x\in S} (A \cap I_x)$. 
		By the construction, for any $x, y \in B$, we have the following three assertions:
			\[ \text{the sets} \ \ A_{-1}+q_r,\ A_1+q_l \ \ \text{and} \ \ A_{-2,\infty, 2}+y \ \ \text{are disjoint}, \]
			\[ \text{the sets} \ \ A_{-1,0,1}+x \ \ \text{and} \ \  A_{\infty}+y \ \ \text{are disjoint}, \]
		and
			\[ |A_0| \le 2^{-4}|A_{-1,0,1}|. \]
		
		From the proof of \cite[Theorem 25]{BLT22} we have the following two claims. Here, $\E_{b \in \ast}|\mathrel{\raisebox{0.3ex}{\scalebox{0.5}{$\bullet$}}}|$ denotes the expectation of the size of the set $\mathrel{\raisebox{0.3ex}{\scalebox{0.5}{$\bullet$}}}$ when $b$ is chosen uniformly at random from $\ast$. We decompose the value $|A + X|$ for a given set $X$ using the equation $|A + X| = |A_{-2,\infty,2} + X| + |(A_{-1,0,1} + X) \setminus (A_{-2,\infty,2} + X)|$, and estimate each term separately. 
		
		\textbf{Claim A.} \emph{
			If $|A_{-2,\infty,2}| \ge |B|-1$ then
				\[ \E_{b \in B \setminus \{q_r\}} \bigg| A_{-2,\infty,2} + \{q_l,q_r, b\} \bigg| \ge |A_{-2,\infty,2}| + |B| - 1. \]
			If $|A_{-2,\infty,2}| \le |B|-1$ then either
				\[ \E_{b \in B \setminus \{q_r\}} \bigg| A_{-2,\infty,2} + \{q_l,q_r, b\} \bigg| \ge 2|A_{-2,\infty,2}| + 2^{-3} (|B| - 1 - |A_{-2,\infty,2}|) \]
			or
				\[ \E_{b_2,b_3\in B \setminus \{q_r\}} \bigg| A_{-2,\infty,2} + \{q_l,b_2, b_3\} \bigg| \ge 2.5 |A_{-2,\infty,2}|. \]
		}
		
		\textbf{Claim B.} \emph{
			We have
				\[ \E_{b\in B \setminus \{q_r\}}\bigg|(A_{-1,0,1}+\{q_l,q_r, b\}) \setminus ( A_{-2,\infty,2}+\{q_l,q_r, b\} ) \bigg| \ge (2-2^{-3})|A_{-1,0,1}| \]
			and
				\[ \E_{b_2,b_3\in B \setminus \{q_r\}}\bigg|(A_{-1,0,1}+\{q_l,b_2, b_3\}) \setminus ( A_{-2,\infty,2}+\{q_l,b_2, b_3\} ) \bigg| \ge (2-2^{-3})|A_{-1,0,1}|. \]
		}
		
		Now we proceed to prove \Cref{thm:BLT_general_3pts}. 
		
		\textbf{Case 1:} 
		If $|A_{-2, \infty, 2}|\ge |B|-1$. Applying Claim A and B, we have
		\begin{eqnarray*}
			&\E_{b \in B \setminus \{q_r\}} \bigg| A+\{q_l, q_r,b\} \bigg| \\
			& \ge \E_{b\in B \setminus \{q_r\}}\bigg|A_{-2,\infty,2}+\{q_l,q_r, b\} \bigg|+ \E_{b\in B \setminus \{q_r\}}\bigg|(A_{-1,0,1}+\{q_l,q_r, b\}) \setminus ( A_{-2,\infty,2}+\{q_l,q_r, b\} ) \bigg| \\
			& \ge |A_{-2, \infty, 2}|+|B|-1+(2-2^{-3})|A_{-1,0,1}| \\
			& = |A| + |B| - 1 + \frac{7}{8} \cdot |A_{-1,0,1}|. 
		\end{eqnarray*}
		
		If $|A_{-1,0,1}| \ge \frac{8r}{7}$, this expression is at least $|A| + |B| - 1 + r$. We get the desired result. 
		Otherwise, since $I_{-1,0,1}$ is an interval of size 
		\begin{align*}
			|I_{-1,0,1}| &= |I| - |I_{-2}| - |I_{2}| = |I| - 2|J| \ge (2^{10} + 2r) |B| - 2|J| \\ 
				&\ge ((1+2^{-10})^{-1}(2^{10} + 2r) - 2)|J| \ge (\frac{8r}{7}+1)|J|, 
		\end{align*}
		there exists a subinterval $I' \subseteq I$ of size at least $|J|-1$ such that $I' \cap A = \varnothing$. Translating $A$ and $B$, we may assume $J = \{0,1, \cdots, |J|-1\}$ and $\{p-|J| + 1, \cdots, p-1\} \subseteq I'$. Then the sum of $A$ and $B$ has no carry-over and hence $(A,B)$ is a rectifiable pair. 
		
		\textbf{Case 2:} 
		If $|A_{-2, \infty, 2}|< |B|$, using Claims A and B we have either
		\begin{eqnarray*}
			& \E_{b \in B \setminus \{q_r\}} \bigg| A+\{q_l, q_r,b\} \bigg| \\
			& \ge \E_{b\in B \setminus \{q_r\}} \bigg| A_{-2,\infty,2}+\{q_l,q_r, b\} \bigg| + \E_{b\in B \setminus \{q_r\}} \bigg| (A_{-1,0,1}+\{q_l,q_r, b\}) \setminus ( A_{-2,\infty,2}+\{q_l,q_r, b\} ) \bigg| \\
			& \ge 2|A_{-2,\infty,2}| + 2^{-3} (|B|-1-|A_{-2,\infty,2}|) + (2-2^{-3}) |A_{-1,0,1}| \\
			& = (2-2^{-3})|A|+2^{-3}(|B|-1) \\
			& \ge |A| + (1-2^{-3})(|B| + 2r) + 2^{-3}(|B|-1) \\
			& \ge |A|+|B|-1+r,
		\end{eqnarray*}
		or
		\begin{eqnarray*}
			& \E_{b_2, b_3 \in B \setminus \{q_r\}} \bigg| A+\{q_l, b_2,b_3\} \bigg| \\
			& \ge \E_{b_2,b_3\in B \setminus \{q_r\}} \bigg| A_{-2, \infty, 2} + \{q_l, b_2, b_3\} \bigg| + \E_{b_2, b_3 \in B \setminus \{q_r\}} \bigg| (A_{-1,0,1} + \{q_l,b_2, b_3\}) \setminus ( A_{-2, \infty, 2} + \{q_l,b_2, b_3\} ) \bigg| \\
			& \ge 2.5 |A_{-2, \infty, 2}| + (2-2^{-3}) |A_{-1,0,1}| \\
			& \ge (2.25 - 2^{-4}) |A| \\
			& \ge |A|+|B|-1+r. 
		\end{eqnarray*}
		Here we use the assumptions $|A| \ge |B| + 2r$, $|A| \ge 8r$, and the fact $|A_{-1,0,1}|  \le |A\setminus I^c| \le  2^{-1}|A|$. 
		
		This completes the proof of \Cref{thm:BLT_general_3pts}.
	\end{proof}
	
\end{document}